\theoremstyle{definition}
\newtheorem{theorem}{Theorem}[section]
\newtheorem{prop}[theorem]{Proposition}
\newtheorem{defn}[theorem]{Definition}
\newtheorem{cor}[theorem]{Corollary}
\newtheorem{eg}[theorem]{Example}
\newtheorem{lemma}[theorem]{Lemma}
\newtheorem{remark}[theorem]{Remark}
\numberwithin{equation}{section}
\DeclareMathOperator{\Ima}{Im}
\DeclareMathOperator{\rank}{rank}
\DeclareMathOperator{\Hom}{Hom}
\DeclareMathOperator{\Harm}{Harm}
\DeclareMathOperator{\id}{id}
\DeclareMathOperator{\diag}{diag}
\DeclareMathOperator{\Ann}{Ann}
\newcommand{\Z}{\mathbb{Z}}
\newcommand{\R}{\mathbb{R}}
\newcommand{\C}{\mathbb{C}}
\newcommand{\F}{\mathbb{F}}
\begin{document}

\title{Weighted (co)homology and weighted Laplacian}

\author{Chengyuan Wu}
\address{Department of Mathematics, National University of Singapore, Singapore 119076}
\email{wuchengyuan@u.nus.edu}
\thanks{Chengyuan Wu, Shiquan Ren, Jie Wu, Kelin Xia -- First authors.}
\thanks{The project was supported in part by the Singapore Ministry of Education research grant (AcRF Tier 1 WBS No.~R-146-000-222-112). The first author was supported in part by the President's Graduate Fellowship of National University of Singapore. The second author was supported by China Postdoctoral Science Foundation (grant no.\ 2022M721023). The third author was supported by the Natural Science Foundation of China (NSFC grant no.\ 11971144) and High-level Scientific Research Foundation of Hebei Province. The fourth author was supported by Singapore Ministry of Education Academic Research fund MOE-T2EP20120-0013 and MOE-T2EP20220-0010.}

\author{Shiquan Ren}
\address{School of Mathematics and Statistics, Henan University, Kaifeng 475004, China}
\email{renshiquan@henu.edu.cn}


\author{Jie Wu}
\address{Yanqi Lake Beijing Institute of Mathematical Sciences and Applications, Beijing 101408, China}
\email{wujie@bimsa.cn}

\author{Kelin Xia}
\address{School of Physical and Mathematical Sciences, Nanyang Technological University, Singapore 637371}
\email{xiakelin@ntu.edu.sg}

\subjclass[2010]{Primary 55N35; Secondary 55U10}



\keywords{Algebraic topology, Weighted cohomology, Weighted Laplacian}

\begin{abstract}
In this paper, we generalize the combinatorial Laplace operator of Horak and Jost by introducing the $\phi$-weighted coboundary operator induced by a weight function $\phi$. Our weight function $\phi$ is a generalization of Dawson's weighted boundary map. We show that our above-mentioned generalizations include new cases that are not covered by previous literature. Our definition of weighted Laplacian for weighted simplicial complexes is also applicable to weighted/unweighted graphs and digraphs.
\end{abstract}

\maketitle
\section{Introduction}
The Laplacian is an important subject of study, with applications in networked control and sensing \cite{muhammad2006control} and statistical ranking \cite{jiang2011statistical}. In the recent seminal paper by Horak and Jost \cite{Horak2013}, important progress on the Laplacian was made. The combinatorial Laplacian $\mathcal{L}(K,w_K)$ in \cite{Horak2013} unifies all Laplace operators studied to date \cite{duval2002shifted,friedman1998computing,grone1990laplacian,chung1996combinatorial,banerjee2008spectrum}. In this paper, we study the weighted Laplacian resulting from a weighted coboundary operator, induced by a weight function $\phi$. By analyzing the methodology in \cite{Horak2013}, we apply the $\phi$-weighted coboundary operator to generalize \cite{Horak2013}.
In Section \ref{subsec:matweightlap}, we show that our $\phi$-weighted Laplacian includes new cases not covered in Horak and Jost \cite{Horak2013}.

In 1990, Robert J.\ Dawson \cite{Dawson1990} was the first to study in depth the homology of weighted simplicial complexes. He studied the weighted boundary map as well as the categorical properties of weighted simplicial complexes. S.~Ren, C.~Wu and J.~Wu \cite{ren2018weighted,ren2017further} then explored further generalizations of weighted simplicial complexes and applications to (weighted) persistent homology \cite{bell2017weighted,edelsbrunner2012persistent,Zomorodian2005,wu2020weighted}. 

In this paper, we study the weighted (co)homology of simplicial complexes by considering weighted (co)boundary operators. In principle, the results in this paper can be generalized to any cellular space, though we focus on the case of simplicial complexes. Our weight function is a generalization of Dawson's weighted boundary map \cite{Dawson1990}, and includes new cases not covered in \cite{Dawson1990}, which will be described in Section \ref{subsec:dawson}. Our weight function can also be viewed as a generalization of the S-complex, which was introduced in \cite{mrozek2009coreduction} by Mrozek and Batko.

In general, one of the motivations of weighted (co)homology is to distinguish different elements in a data set. In the papers \cite{cang2017topologynet,cang2018persistent}, Z.\ Cang and G.\ Wei have used element-specific persistent homology and cohomology for the analysis of proteins to excellent effect. In the paper \cite{ren2018weighted}, the authors show that weighted persistent homology can tell apart filtrations that ordinary persistent homology does not distinguish. Our paper can be viewed as a further exploration of \cite{ren2018weighted} by studying various kinds of weight functions.

For applications, we show in Section \ref{subsec:biomolecular} that $\phi$-weighted homology can differentiate between different weighted polygons. A motivating example is the case of ring structures in biomolecules, by setting bond angles as weights. We derive and prove a formula for the $\phi$-weighted homology of a pentagonal ring structure, and generalize it for a $n$-gon in Theorem \ref{thm:ngon}. 

Our next application is regarding the $\phi$-weighted Laplacian of digraphs. The Laplacian of digraphs have been studied by Y.\ Li and Z.\ Zhang in \cite{li2010random,li2012digraph}. We develop a different way of defining the Laplacian of digraphs. The motivating example is the topic of network motifs (which are recurrent sub-graphs or patterns), which has found important applications in biology \cite{alon2007network,ferrell2013feedback,milo2002network,masoudi2012building}. In Section \ref{subsec:network}, we demonstrate how the eigenvalues and eigenvectors of the $\phi$-weighted Laplacian can distinguish between the 8 different types of feedforward loops (FFLs), which are certain types of digraphs. An advantage of our approach for the weighted Laplacian for digraphs is that it generalizes easily to higher dimensions (directed simplicial complexes), and also to cases where there are more than two types of arrows (e.g.\ weighted graphs/digraphs). 

\subsection{Other Related Work}
In 2007, Davis, Dymara, Januszkiewicz and Okun \cite{davis2007weighted} studied the weighted $L^2$-cohomology of Coxeter groups, using another approach specialized to a Coxeter system. In \cite{li2015twisted}, J.~Y.~Li, V.~V.~Vershinin and J.~Wu study twisted simplicial groups and twisted homology of categories. 

In \cite{hansen2018toward}, J.\ Hansen and R.\ Ghrist outlines a program of spectral sheaf theory, which is an extension of spectral graph theory to cellular sheaves. They study weighted cellular sheaves, which is a cellular sheaf with values in $\F$-vector spaces.

In \cite{sharma2017weighted}, A.\ Sharma, T.J.\ Moore, A.\ Swami and J.\ Srivastava study the evolution and formation of small collaborations effectively using a hybrid approach of weighted simplicial complexes. Their work is effective in overcoming the shortcomings of traditional group network modeling approaches.

In addition, the authors O.T.\ Courtney and G.\ Bianconi \cite{courtney2017weighted} study a nonequilibrium model for weighted growing simplicial complexes. Their proposed method is able to generate weighted simplicial complexes with a rich interplay between weights and topology at various levels.

In the paper \cite{giusti2016two}, the authors C.\ Giusti, R.\ Ghrist and D.S.\ Bassett explore the use of simplicial complexes to study problems in neuroscience. The initial results are very promising and represent an important step towards understanding higher-order structure in neural data.

\section{$\phi$-Weighted Boundary Operator}
Given a simplicial complex $K$ and an abelian group $G$ as coefficient group, the $n$-th chain group $C_n(K;G)$ is a direct sum of copies of $G$ labelled by the $n$-simplices of $K$. Let $G$ be an $R$-module for some commutative ring with unity $R$. (Such an $R$ always exists since an abelian group is also a $\Z$-module.) We consider weights $\phi(\sigma,d_i\sigma)$ on $n$-simplices and its $i$-th face as elements of $R$. In this paper, all rings are assumed to be commutative rings with unity.

Throughout this paper, we use additive notation for the abelian group $G$. We fix a total order of the vertices of $K$, and impose the convention that a positively oriented simplex $\sigma\in K$ is always written with vertices in ascending order, that is, $\sigma=[v_{a_0},v_{a_1},\dots,v_{a_n}]$ where $a_0< a_1<\dots< a_n$. For a positively oriented simplex $\sigma=[v_{a_0},v_{a_1},\dots,v_{a_n}]$, we define the face map $d_i$ ($i\geq 0$) by $d_i\sigma=[v_{a_0},\dots,\widehat{v_{a_i}},\dots,v_{a_n}]$ (deleting the $i$-th vertex) so that $d_i\sigma$ is well-defined for each $i$. For a positively oriented simplex $\sigma$, we denote by $-\sigma$ the same simplex with negative orientation, and define $d_i(-\sigma)=-d_i(\sigma)$.

\begin{defn}[Weight function]
\label{defn:weightfn}
Let $G$ be an $R$-module.
We define a \emph{weight function} to be a function $\phi: K\times K\to R$, such that $\phi$ satisfies 
\begin{equation}
\label{eqn:weightcond}
\phi(d_i\sigma,d_jd_i\sigma)\phi(\sigma,d_i\sigma)g=\phi(d_j\sigma,d_jd_i\sigma)\phi(\sigma,d_j\sigma)g
\end{equation}
for all $\sigma\in K$, $g\in G$ and $j<i$.
\end{defn}


\begin{remark}
\label{remark:notface}
We notice that (\ref{eqn:weightcond}) does not depend on the values $\phi(\sigma,\tau)$ where $\tau$ is not a face of $\sigma$. Hence for convenience, we can define $\phi(\sigma,\tau)=0$ if $\tau$ is not a face of $\sigma$. If $\sigma$ is a 0-simplex, then $d_i\sigma$ only makes sense if $i=0$, hence there is no $j\geq 0$ that satisfies $j<i$. Hence, we can also define $\phi(\sigma,d_i\sigma)=0$ if $\sigma$ is a 0-simplex, such that \eqref{eqn:weightcond} is satisfied. Thus, in general when defining the weight function we can omit mentioning values of $\phi(\sigma,\tau)$ where either $\tau$ is not a face of $\sigma$, or $\sigma$ is a 0-simplex.
\end{remark}

\begin{remark}
It may seem on the surface that Definition \ref{defn:weightfn} is too strong and hence restricts the applicability of the theory. However, that is not the case, and in fact the usual boundary map (Example \ref{eg:id}) and the weighted boundary map in \cite{Dawson1990} (Section \ref{subsec:dawson}) both arise from weight functions that satisfy Definition \ref{defn:weightfn}.
\end{remark}

\begin{defn}
\label{defn:weightbm}
Given an $n$-chain $\alpha=\sum_{\sigma} g_\sigma \sigma$, where $\sigma$ runs over the $n$-simplices of $K$, we define the \emph{$\phi$-weighted boundary map} $\partial_n: C_n(K;G)\to C_{n-1}(K;G)$ by 
\begin{equation*}
\partial_n(g\sigma)=\sum_i (-1)^i \phi(\sigma,d_i\sigma)g d_i\sigma,
\end{equation*}
where $\phi$ is a weight function. We extend $\partial_n$ linearly to $C_n(K;G)$ by defining $\partial_n(\sum_{\sigma} g_\sigma \sigma):=\sum_{\sigma} \partial_n(g_\sigma\sigma)$.
\end{defn}

\begin{remark}
Unless otherwise specified, for ease of notation the symbol $\partial$ will refer to the \emph{weighted} boundary map in this paper.
\end{remark}

The following proposition justifies our definition of the weight function. As proved in the next proposition, the condition in Definition \ref{defn:weightfn} is equivalent to the condition $\partial_{n-1}\partial_n=0$.

\begin{prop} Let $\phi: K\times K\to R$ be a function.
Let $\partial_n: C_n(K;G)\to C_{n-1}(K;G)$ be defined as in Definition \ref{defn:weightbm}. Then $\partial_{n-1}\partial_n=0$ holds if and only if $\phi$ is a weight function.
\end{prop}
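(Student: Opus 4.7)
The plan is to compute $\partial_{n-1}\partial_n(g\sigma)$ explicitly on a generator $g\sigma$ and extract the condition on $\phi$ coefficient by coefficient. Applying Definition \ref{defn:weightbm} twice gives
\begin{equation*}
\partial_{n-1}\partial_n(g\sigma)=\sum_{i,j}(-1)^{i+j}\phi(\sigma,d_i\sigma)\,\phi(d_i\sigma,d_jd_i\sigma)\,g\,d_jd_i\sigma,
\end{equation*}
where Remark \ref{remark:notface} harmlessly annihilates any term in which $d_i\sigma$ or $d_jd_i\sigma$ fails to be a face. By $G$-linearity of $\partial$ it is enough to work with such generators.

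I would then split the double sum according to whether $j<i$ or $j\geq i$, and rewrite every iterated face in the canonical form $d_bd_a\sigma$ with $b<a$. The $j<i$ block is already canonical under $(b,a)=(j,i)$. For the $j\geq i$ block, the standard simplicial identity $d_jd_i=d_id_{j+1}$ (equivalently $d_{a-1}d_b=d_bd_a$ for $b<a$) permits the substitution $(b,a)=(i,j+1)$, under which $d_jd_i\sigma=d_bd_a\sigma$ and the sign $(-1)^{i+j}$ becomes $-(-1)^{a+b}$. Collecting both contributions, the coefficient of each distinct $d_bd_a\sigma$ with $b<a$ simplifies to
\begin{equation*}
(-1)^{a+b}\bigl[\phi(\sigma,d_a\sigma)\,\phi(d_a\sigma,d_bd_a\sigma)-\phi(\sigma,d_b\sigma)\,\phi(d_b\sigma,d_bd_a\sigma)\bigr]g,
\end{equation*}
which, using commutativity of $R$, is exactly the difference between the two sides of (\ref{eqn:weightcond}) under the relabeling $(i,j)=(a,b)$.

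To finish the equivalence: for the ``if'' direction, if $\phi$ is a weight function then each bracket annihilates every $g\in G$, hence $\partial_{n-1}\partial_n(g\sigma)=0$, and linear extension yields $\partial_{n-1}\partial_n=0$. For the ``only if'' direction, fix $\sigma$ and observe that the faces $d_bd_a\sigma$ for $0\leq b<a\leq n$ correspond to distinct $(n-2)$-element subsets of the vertex set of $\sigma$, so they are distinct basis elements of $C_{n-2}(K;G)$. Vanishing of $\partial_{n-1}\partial_n(g\sigma)$ therefore forces each bracket to annihilate $g$; allowing $\sigma$ and $g$ to vary recovers (\ref{eqn:weightcond}) in full generality.

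The main obstacle is the index bookkeeping in the middle paragraph: lining up the two halves of the double sum via the simplicial identity so that the signs and $\phi$-products end up matching the axiom on the nose. Once that pairing is set up, the rest is formal, with Remark \ref{remark:notface} absorbing the degenerate cases (zero-simplices and non-face pairs) so that no separate argument is needed for low dimensions.
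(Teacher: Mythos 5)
Your proposal is correct and follows essentially the same route as the paper: expand the double sum, split according to $j<i$ versus $j\geq i$, reindex the second block via the simplicial identity ($d_jd_i=d_{i-1}d_j$ for $j<i$, equivalently your $d_jd_i=d_id_{j+1}$ for $j\geq i$), and match the resulting coefficient of each $d_bd_a\sigma$ against condition (\ref{eqn:weightcond}). Your write-up is in fact more explicit than the paper's on the two points it leaves implicit, namely the sign bookkeeping in the reindexing and the observation that the faces $d_bd_a\sigma$ are distinct basis simplices so that vanishing of the sum forces each bracket to annihilate $g$.
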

\begin{proof}
We calculate that
\begin{equation}
\label{eq:partial2working}
\begin{split}
\partial_{n-1}\partial_n(g\sigma)&=\partial_{n-1}\left(\sum_i(-1)^i\phi(\sigma,d_i\sigma)gd_i\sigma\right)\\
&=\sum_i(-1)^i\phi(\sigma,d_i\sigma)g\partial_{n-1}d_i\sigma\\
&=\sum_{j<i}(-1)^j \phi(d_i\sigma,d_jd_i\sigma)(-1)^i \phi(\sigma,d_i\sigma)gd_jd_i\sigma\\
&\quad+\sum_{j>i}(-1)^{j-1}\phi(d_i\sigma,d_{j-1}d_i\sigma)(-1)^i\phi(\sigma,d_i\sigma)gd_{j-1}d_i\sigma\\
&=\sum_{j<i}(-1)^{i+j}\phi(d_i\sigma,d_jd_i\sigma)\phi(\sigma,d_i\sigma)gd_jd_i\sigma\\
&\quad+\sum_{j>i}(-1)^{i+j-1}\phi(d_i\sigma,d_{j-1}d_i\sigma)\phi(\sigma,d_i\sigma)gd_{j-1}d_i\sigma\\
&=\sum_{j<i}(-1)^{i+j}\phi(d_i\sigma,d_jd_i\sigma)\phi(\sigma,d_i\sigma)gd_jd_i\sigma\\
&\quad+\sum_{i>j}(-1)^{i+j-1}\phi(d_j\sigma,d_{i-1}d_j\sigma)\phi(\sigma,d_j\sigma)gd_{i-1}d_j\sigma.
\end{split}
\end{equation}

Suppose $\partial_{n-1}\partial_n=0$. Then with the help of the identity $d_jd_i=d_{i-1}d_j$ for $j<i$, (\ref{eq:partial2working}) implies Equation \ref{eqn:weightcond} holds for all $\sigma\in K$ and $j<i$. Hence $\phi$ is a weight function.

Conversely, suppose $\phi$ is a weight function, i.e.\ Equation \ref{eqn:weightcond} holds for all $\sigma\in K$, $g\in G$ and $j<i$. Using the same calculations above, we can verify that $\partial_{n-1}\partial_n=0$.
\end{proof}

\begin{eg}
\label{eg:id}
Let $\phi(\sigma,d_i\sigma)\equiv 1_R$ for all $\sigma\in K$ and all $i$. We can verify that $\phi$ is a weight function. We call $\phi$ the \emph{identity weight function}. We observe that the $\phi$-weighted boundary map reduces to the usual boundary map when $\phi$ is the identity weight function.
\end{eg}

\begin{eg}
\label{eg:zero}
Let $\phi(\sigma,d_i\sigma)\in\Ann_R(G)$ for all $\sigma\in K$ and all $i$, i.e.\ $\phi(\sigma,d_i\sigma)g=0$ for all $g\in G$. We can verify that $\phi$ is a weight function, which we call the \emph{zero weight function}.
\end{eg}

\section{The case $G=\Z$, $R=\Z$}
We first consider the case where $G=\Z$, $R=\Z$. That is, we consider $G=\Z$ as a $\Z$-module (abelian group).


The condition for being a weight function (\ref{eqn:weightcond}) has a simplified version when $G=\mathbb{Z}$ and $R=\Z$.

\begin{prop}
\label{prop:integercond}
A function $\phi: K\times K\to\Z$ is a weight function if and only if
\begin{equation}
\label{eq:integercond}
\phi(\sigma,d_i\sigma)\cdot\phi(d_i\sigma,d_jd_i\sigma)=\phi(\sigma,d_j\sigma)\cdot\phi(d_j\sigma,d_jd_i\sigma)
\end{equation}
for all $\sigma\in K$ and for all $j<i$.
\end{prop}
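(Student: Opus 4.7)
The plan is to observe that Proposition \ref{prop:integercond} is essentially the specialization of Definition \ref{defn:weightfn} to the faithful module $G=\Z$ over $R=\Z$. Writing $a := \phi(d_i\sigma,d_jd_i\sigma)\phi(\sigma,d_i\sigma)$ and $b := \phi(d_j\sigma,d_jd_i\sigma)\phi(\sigma,d_j\sigma)$, both sides of \eqref{eqn:weightcond} are just ordinary integer products $ag$ and $bg$, since the $\Z$-module action on $\Z$ is nothing but integer multiplication. The claim reduces to: the equality $ag = bg$ holds for every $g \in \Z$ if and only if $a = b$.

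For the forward direction, I would assume $\phi$ is a weight function in the sense of Definition \ref{defn:weightfn}, fix any $\sigma\in K$ and any $j<i$, and simply substitute $g=1$ into \eqref{eqn:weightcond}. Since $1\cdot n = n$ in $\Z$, this collapses immediately to \eqref{eq:integercond}. For the reverse direction, I would assume \eqref{eq:integercond} and, for an arbitrary $g\in\Z$, multiply both sides of \eqref{eq:integercond} through by $g$; associativity and commutativity of integer multiplication then recover \eqref{eqn:weightcond} for every $g$, so $\phi$ is a weight function.

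There is no real obstacle: the only conceptual point worth flagging is why this simplification is available here but not in general, namely that $\Z$ is a faithful $\Z$-module, so the quantifier ``for all $g\in G$'' in \eqref{eqn:weightcond} collapses to the single test case $g=1$. For a general $R$-module $G$ one could only conclude that $a-b$ lies in $\Ann_R(G)$, as in Example \ref{eg:zero}, which is strictly weaker than $a=b$. I would probably state this as a one-sentence remark accompanying the proof, so that the reader sees why the integer-valued case genuinely simplifies.
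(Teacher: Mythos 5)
Your proposal is correct and follows essentially the same route as the paper's proof, which likewise reduces the claim to the faithfulness of $\Z$ as a $\Z$-module (noting that $1$ is annihilated only by $0$, so $rg=sg$ for all $g\in\Z$ iff $r=s$). Your added remark about the general case, where one only gets $a-b\in\Ann_R(G)$, is a sensible clarification but does not change the argument.
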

\begin{proof}
We observe that $\Z$ is a faithful $\Z$-module, i.e. $\Ann_\Z(\Z)=0$. This can be seen from the fact that $1$ is annihilated only by 0. Let $r,s\in\Z$. Then, $r\cdot g=s\cdot g$ for all $g\in\Z$ if and only if $r=s$. This implies that (\ref{eqn:weightcond}) holds if and only if (\ref{eq:integercond}) holds.
\end{proof}

\subsection{Classification of Weight Functions}
\label{subsec:class}
As mentioned in Examples \ref{eg:id} and \ref{eg:zero}, we have the identity weight function $\phi(\sigma,d_i\sigma)\equiv 1$ and the zero weight function $\phi(\sigma,d_i\sigma)\equiv 0$, for all $\sigma\in K$ and all $i$.

We now proceed to study some nontrivial weight functions. In particular, we show that the definition of the weighted boundary map in \cite{Dawson1990} is a special case of our Definition \ref{defn:weightbm}.

\subsubsection{Semi-trivial Weight Functions}
Let $A,B\subseteq K$ be subsets (not necessarily subcomplexes) such that $A\cup B=K$. We construct the following class of semi-trivial weight functions.
\begin{prop}
Let $a:K\times K\to\Z$ be any arbitrary function. Consider the function $\phi: K\times K\to\Z$ such that
\begin{equation*}
\phi(\sigma,d_i\sigma)=\begin{cases}
0 &\text{if $\sigma\in A$}\\
0 &\text{if $d_i\sigma\in B$}\\
a(\sigma,d_i\sigma) &\text{otherwise}.
\end{cases}
\end{equation*}

Then $\phi$ is a weight function. 

We call weight functions of the above form \emph{semi-trivial weight functions}.
\end{prop}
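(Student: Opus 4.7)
The plan is to verify that $\phi$ satisfies the simplified integer condition (\ref{eq:integercond}), which by Proposition \ref{prop:integercond} is equivalent to $\phi$ being a weight function. The guiding idea is that the hypothesis $A\cup B=K$ forces enough values of $\phi$ to vanish that both sides of (\ref{eq:integercond}) are simultaneously zero in every case, so no cancellation among nonzero terms is ever needed.

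First I would dispose of two easy cases. If $\sigma\in A$, then both $\phi(\sigma,d_i\sigma)$ and $\phi(\sigma,d_j\sigma)$ vanish by the first clause of the definition, so both products are zero. If instead $\sigma\notin A$ but $d_jd_i\sigma\in B$, then both $\phi(d_i\sigma,d_jd_i\sigma)$ and $\phi(d_j\sigma,d_jd_i\sigma)$ vanish by the second clause (their second arguments lie in $B$), so again both products are zero.

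The only remaining situation is $\sigma\in B\setminus A$ and $d_jd_i\sigma\in A\setminus B$. For the left-hand side, I would examine $d_i\sigma$, which by $A\cup B=K$ must lie in $A$ or in $B$. If $d_i\sigma\in A$ then $\phi(d_i\sigma,d_jd_i\sigma)=0$; if $d_i\sigma\in B$ then $\phi(\sigma,d_i\sigma)=0$. Either way the left-hand side equals $0$. Applying exactly the same argument with $d_j\sigma$ in place of $d_i\sigma$, and using $d_jd_i\sigma=d_{i-1}d_j\sigma$ to identify the inner face, shows that the right-hand side also vanishes.

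I do not expect any genuine obstacle: the proof is purely a clean case split on the membership of $\sigma$ and $d_jd_i\sigma$ in $A$ versus $B$, with $A\cup B=K$ forcing each branch to yield zero. The only point to keep in mind is the convention in Remark \ref{remark:notface}, which takes care of low-dimensional degenerate cases where $d_jd_i\sigma$ does not strictly make sense by declaring the relevant values of $\phi$ to be $0$, so that (\ref{eq:integercond}) holds trivially there as well.
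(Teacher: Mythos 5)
Your proposal is correct and the core of your argument — observing that $d_i\sigma$ (resp.\ $d_j\sigma$) must lie in $A$ or in $B$ since $A\cup B=K$, so that one factor of each product in (\ref{eq:integercond}) vanishes — is exactly the paper's proof. The preliminary case splits on $\sigma$ and $d_jd_i\sigma$ are harmless but redundant, since your final argument already covers all cases.
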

\begin{proof}
By Proposition \ref{prop:integercond}, we only need to check that the identity
\begin{equation}
\label{eq:checkid}
\phi(\sigma,d_i\sigma)\cdot\phi(d_i\sigma,d_jd_i\sigma)=\phi(\sigma,d_j\sigma)\cdot\phi(d_j\sigma,d_jd_i\sigma)
\end{equation}
holds for all $\sigma\in K$ and all $j<i$.

We consider $d_i\sigma\in K$. Since $A\cup B=K$, we have either $d_i\sigma\in A$ or $d_i\sigma\in B$. Hence, either $\phi(d_i\sigma,d_jd_i\sigma)=0$ or $\phi(\sigma,d_i\sigma)=0$. In both cases, the left side of (\ref{eq:checkid}) is 0. Similarly, by considering $d_j\sigma$, the right side of (\ref{eq:checkid}) is also 0.
\end{proof}

\subsection{Dawson's Weighted Boundary Map}
\label{subsec:dawson}
In \cite{Dawson1990}, Dawson defined a \emph{weighted simplicial complex} to be a pair $(K,w)$ consisting of a simplicial complex $K$ and a function $w:K\to\mathbb{Z}$ satisfying a \emph{divisibility condition}: $\sigma_1\subseteq \sigma_2\implies w(\sigma_1)\mid w(\sigma_2)$ for all $\sigma_1,\sigma_2\in K$. 

The weighted boundary map $\tilde{\partial}$ \cite[p.~234]{Dawson1990} is then defined by
\begin{equation}
\label{eq:dawsonboundary}
\tilde{\partial}(\sigma)=\sum_{i}\frac{w(\sigma)}{w(d_i\sigma)}(-1)^id_i\sigma,
\end{equation}
where $\sigma$ is a simplex satisfying $w(\sigma)\neq 0$.

Dawson's definition of weighted boundary map (\ref{eq:dawsonboundary}) is a special case of our definition of the weight function $\phi$ and $\phi$-weighted boundary map. In fact, suppose $w:K\to\Z$ satisfies the divisibility condition, and also satisfies $w(\sigma)\neq 0$ for all $\sigma\in K$. Define $\phi:K\times K\to\Z$ by $\phi(\sigma,d_i\sigma)=\frac{w(\sigma)}{w(d_i\sigma)}\in\Z$ for all $\sigma$ and all $i$. Note that
\begin{equation*}
\phi(\sigma,d_i\sigma)\cdot\phi(d_i\sigma,d_jd_i\sigma)
=\frac{w(\sigma)}{w(d_jd_i\sigma)}=\phi(\sigma,d_j\sigma)\cdot\phi(d_j\sigma,d_jd_i\sigma).
\end{equation*}
Hence, by Proposition \ref{prop:integercond}, $\phi$ is a weight function.

\subsection{Example of a new nontrivial weight function}
We give an example to illustrate that a weight function $\phi$ can produce a $\phi$-weighted boundary map different from Dawson's weighted boundary map \cite{Dawson1990}.

\begin{eg}
Let $K=\{[v_0],[v_1],[v_2],[v_0,v_1],[v_0,v_2],[v_1,v_2],[v_0,v_1,v_2]\}$ be a 2-simplex. Let $\sigma=[v_0,v_1,v_2]$. Let $\phi: K\times K\to\Z$ be the weight function such that:
\begin{equation*}
\begin{alignedat}{25}
&&\phi(\sigma,d_0\sigma)=0, \qquad&&\phi(\sigma,d_1\sigma)=3, \qquad&&\phi(\sigma,d_2\sigma)=1,\\
&&\phi(d_0\sigma,d_0d_1\sigma)=2, \qquad &&\phi(d_0\sigma,d_0d_2\sigma)=4, \qquad&&\phi(d_1\sigma,d_0d_1\sigma)=0,\\
&&\phi(d_1\sigma,d_1d_2\sigma)=2, \qquad&&\phi(d_2\sigma,d_0d_2\sigma)=0, \qquad&&\phi(d_2\sigma,d_1d_2\sigma)=6.
\end{alignedat}
\end{equation*}

We can check that $\phi$ satisfies (\ref{eq:integercond}) hence is a weight function. We have 
\begin{equation*}
\partial_2([v_0,v_1,v_2])=0[v_1,v_2]-3[v_0,v_2]+1[v_0,v_1].
\end{equation*}

Note that the zero coefficient for $[v_1,v_2]$ immediately implies that our $\phi$-weighted boundary map is different from that in \cite{Dawson1990}. In \cite{Dawson1990}, all coefficients of $d_i\sigma$ are of the form $\frac{w(\sigma)}{w(d_i\sigma)}$ where $w(\sigma)\neq 0$. Hence it is impossible to have $\frac{w(\sigma)}{w(d_i\sigma)}=0$.
\end{eg}

\begin{prop}
Suppose a weight function $\phi: K\times K\to\mathbb{Z}$ can be expressed in the form $\phi(\sigma,d_i\sigma)=\frac{w(\sigma)}{w(d_i\sigma)}$ for some function $w:K\to\mathbb{Z}$ satisfying $\sigma_1\subseteq\sigma_2 \implies w(\sigma_1)\mid w(\sigma_2)$ for all $\sigma_1,\sigma_2\in K$. For convenience, we call such $\phi$ to be ``of Dawson type''.
Suppose $\phi(\sigma,d_i\sigma)\neq 0$ for all $\sigma\in K$ and that $\phi(\tau,d_0\tau)=1$ for all $0$-simplices $\tau$. Then $\phi$ is of Dawson type.
\end{prop}
\begin{proof}
We define $w:K\to\mathbb{Z}$ inductively on dimension, as follows.

For each $0$-simplex $\tau$, we define $w(\tau)=1$.

For each $1$-simplex $\sigma$, we define $w(\sigma)=\phi(\sigma,d_i\sigma)$. Proposition \ref{prop:integercond} and the convention that $\phi(d_i\sigma,d_jd_i\sigma)=1$ since $d_i\sigma$ is a $0$-simplex implies that $\phi(\sigma,d_i\sigma)\cdot 1=\phi(\sigma,d_j\sigma)\cdot 1$ for all 1-simplices $\sigma$ and for all $j<i$. Hence, $w$ is well-defined on 1-simplices. By construction, we have $\phi(\sigma,d_i\sigma)=\frac{w(\sigma)}{1}=\frac{w(\sigma)}{w(d_i\sigma)}$.

For $n\geq 2$, for each $n$-simplex $\sigma$, we define $w(\sigma)$ to be $\phi(\sigma,d_i\sigma)\cdot w(d_i\sigma)$. Using Proposition \ref{prop:integercond} and the inductive hypothesis we see that this is independent of $i$. Then, by construction we have $\phi(\sigma,d_i\sigma)=\frac{w(\sigma)}{w(d_i\sigma)}$ and $\sigma_1\subseteq\sigma_2 \implies w(\sigma_1)\mid w(\sigma_2)$ for all $\sigma_1,\sigma_2\in K$. In addition, since $\phi(\sigma,d_i\sigma)\neq 0$ for all $\sigma\in K$, hence $w(\sigma)\neq 0$ for all $\sigma\in K$ as well, ensuring that there is no division by zero.

Hence, $\phi$ is of Dawson type.
\end{proof}

\subsection{New way to construct weight function from a weighted simplicial complex $(K,w)$}
Let $K$ be a simplicial complex. Let $w:K\to\Z$ be an arbitrary function ($w$ does not need to satisfy the divisibility condition). Let $f:w(K)\to\Z\setminus\{0\}$ be an arbitrary function mapping values in $w(K)=\{w(\sigma)\mid \sigma\in K\}$ to nonzero integers. We define $\phi: K\times K\to\Z$ by
\begin{equation}
\label{eq:newnatural}
\phi(\sigma,d_i\sigma)=\frac{Cf(w(\sigma))}{f(w(d_i\sigma))},
\end{equation}
where $C\in\Z$ is a chosen constant such that $\frac{Cf(w(\sigma))}{f(w(d_i\sigma))}$ is always an integer for all $\sigma\in K$ and all $i$. (Such a $C$ always exists, for instance we can choose $C$ to be an arbitrary common multiple of $\{f(w(\sigma))\mid\sigma\in K\}$. Alternatively, we may choose $C=0$.) We remark that the $\phi$ defined in \eqref{eq:newnatural} is equivalent to Dawson's weighted boundary map induced by the weight function $w(\sigma)=C^{\dim(\sigma)}f(w(\sigma))$.

We can check that
\begin{equation*}
\phi(\sigma,d_i\sigma)\cdot\phi(d_i\sigma,d_jd_i\sigma)
=\frac{C^2f(w(\sigma))}{f(w(d_jd_i\sigma))}=\phi(\sigma,d_j\sigma)\cdot\phi(d_j\sigma,d_jd_i\sigma).
\end{equation*}
Hence, by Proposition \ref{prop:integercond}, $\phi$ is a weight function. We also note that when $C=1$, $f=\id$, by imposing the divisibility condition on $w:K\to\Z$, the weight function $\phi$ reduces to Dawson's weighted boundary map \cite{Dawson1990}.

We also show that the weight function $\phi$ as defined in Equation \ref{eq:newnatural} is the most general, in the sense that any other weight function constructed from a weighted simplicial complex has the same form as Equation \ref{eq:newnatural}.

\begin{defn}
Let $g:\Z\times \Z\to\Z$ satisfy the functional equation
\begin{equation}
\label{eq:functionalg}
g(x,y)g(y,z)=g(x,v)g(v,z)
\end{equation}
for all $v,x,y,z\in\Z$. We call $g(x,y)\equiv 0$ a \emph{trivial} solution. Let $A,B\subseteq\Z$ such that $A\cup B=\Z$. We call \[g(x,y)=\begin{cases}
0 &\text{if $x\in A$}\\
0 &\text{if $y\in B$}\\
\text{arbitrary} &\text{otherwise}
\end{cases}\]
a \emph{semi-trivial} solution. (We can verify that both the trivial and semi-trivial solutions do satisfy the functional equation (\ref{eq:functionalg}).)
\end{defn}

\begin{prop}
Let $g:\Z\times \Z\to\Z$ be a function that is not semi-trivial (which also implies $g$ is not trivial). Suppose that $\phi':K\times K\to\Z$ where 
\begin{equation*}
\phi'(\sigma,d_i\sigma)=g(w(\sigma),w(d_i\sigma))
\end{equation*}
is a weight function for all functions $w:K\to\Z$.

Then $\phi'$ must be of the form \[\phi'(\sigma,d_i\sigma)=\frac{ch(w(\sigma))}{h(w(d_i\sigma))},\] where $c$ is a constant and $h:\Z\to\Z\setminus\{0\}$ is a function. That is, $\phi'$ must have the same form as Equation \ref{eq:newnatural}.
\end{prop}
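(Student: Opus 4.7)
The plan is to exhibit a privileged index $y_0\in\Z$ at which both the row and column of $g$ are nowhere zero, and then use the functional equation~(\ref{eq:functionalg}) with $y_0$ as a pivot to extract the multiplicative form.

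First I would reformulate the non-semi-triviality hypothesis. Set $A'=\{x\in\Z:g(x,y)=0\text{ for all }y\}$ (the set of zero rows of $g$) and $B'=\{y\in\Z:g(x,y)=0\text{ for all }x\}$ (the set of zero columns). Any pair $(A,B)$ witnessing semi-triviality must satisfy $A\subseteq A'$ and $B\subseteq B'$, so $g$ is semi-trivial if and only if $A'\cup B'=\Z$. Since $g$ is assumed not semi-trivial, there exists $y_0\notin A'\cup B'$, giving us $x_1,z_1\in\Z$ with $g(x_1,y_0)\neq 0$ and $g(y_0,z_1)\neq 0$.

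Next I would propagate nonzero entries using~(\ref{eq:functionalg}). The substitution $x=x_1,\ y=y_0,\ z=z_1$ makes the left-hand side nonzero, forcing $g(x_1,w)\neq 0$ and $g(w,z_1)\neq 0$ for every $w\in\Z$; in other words, row $x_1$ and column $z_1$ are everywhere nonzero. Further specializations of~(\ref{eq:functionalg}) (for example $x=x_1,\ y=z_1,\ z=z_1,\ w=x_1$) yield $g(z_1,z_1)=g(x_1,x_1)\neq 0$, and a similar manipulation gives $g(y_0,x_1)=g(y_0,z_1)\,g(z_1,z_1)/g(x_1,z_1)\neq 0$. The crucial step is to show $g(y_0,y_0)\neq 0$: setting $x=y=z=y_0$ in~(\ref{eq:functionalg}) with $w$ free yields $g(y_0,y_0)^2=g(y_0,w)\,g(w,y_0)$, and specializing $w=x_1$ produces a nonzero right-hand side. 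Hence $g(y_0,y_0)\neq 0$, and the same identity then forces $g(y_0,w)\neq 0$ and $g(w,y_0)\neq 0$ for every $w$, making both the row and column of $y_0$ everywhere nonzero.

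With $y_0$ as a valid pivot, I would substitute $z=w=y_0$ into~(\ref{eq:functionalg}) to obtain $g(x,y)\,g(y,y_0)=g(x,y_0)\,g(y_0,y_0)$, and since $g(y,y_0)\neq 0$ solve for
\[
g(x,y)\;=\;\frac{g(y_0,y_0)\, g(x,y_0)}{g(y,y_0)}.
\]
Setting $c=g(y_0,y_0)\in\Z$ and $h(y)=g(y,y_0)\in\Z\setminus\{0\}$ yields the required form $\phi'(\sigma,d_i\sigma)=c\,h(w(\sigma))/h(w(d_i\sigma))$. The main obstacle is upgrading the \emph{a priori} weak non-semi-triviality hypothesis to the strong conclusion $g(y_0,y_0)\neq 0$, which is what makes $y_0$ a usable pivot; the remaining manipulations of the functional equation are straightforward.
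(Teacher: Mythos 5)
Your proof is correct, and it reaches the paper's conclusion by a recognizably similar pivot identity but with a genuinely different way of legitimizing the pivot. The paper picks an \emph{arbitrary} column index $a$, sets $h(x)=g(x,a)$, and argues by contradiction: if $h$ vanished anywhere, the relation $g(x,y)h(y)=h(x)h(a)$ would force $h(a)=0$ and hence an identically zero column, contradicting non-semi-triviality (the paper has already shown, via the functional equation, that a single zero row or zero column forces semi-triviality). You instead work forward: you correctly reformulate non-semi-triviality as ``some index $y_0$ is neither a zero row nor a zero column,'' and then chain substitutions in~(\ref{eq:functionalg}) to upgrade this to the statement that \emph{both} the row and the column of $y_0$ are nowhere zero (the key link being $g(y_0,y_0)^2=g(y_0,w)g(w,y_0)$ with $w=x_1$), after which $g(x,y)=g(y_0,y_0)g(x,y_0)/g(y,y_0)$ drops out directly. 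Each step you take checks out (the cancellations are valid because $\Z$ is an integral domain, and the divisions are exact since the defining identities hold in $\Z$ with nonzero divisors). Your route is slightly longer but more constructive, and it makes explicit which index can serve as a pivot; the paper's route is shorter because its preliminary lemma (any zero row or column implies semi-triviality) lets it use \emph{any} column as the pivot. The only omission worth noting is that you start from the functional equation~(\ref{eq:functionalg}) as given, whereas the proposition's hypothesis is that $\phi'$ is a weight function for every $w:K\to\Z$; the paper spends its first few lines passing from that hypothesis, via Proposition~\ref{prop:integercond} and the arbitrariness of $w$, to the unrestricted functional equation on $\Z\times\Z$. You should add that bridging sentence, but it is routine and does not affect the substance of your argument.
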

\begin{proof}
By Proposition \ref{prop:integercond}, $g$ must satisfy 
\begin{equation*}
g(w(\sigma),w(d_i\sigma))\cdot g(w(d_i\sigma),w(d_jd_i\sigma))=g(w(\sigma),w(d_j\sigma))\cdot g(w(d_j\sigma),w(d_jd_i\sigma))
\end{equation*}
for all $\sigma\in K$, all $j<i$ and all $w:K\to\Z$.

Since $w:K\to\Z$ has no restrictions, $w(\sigma)$ can take any value in $\Z$. Hence we may extend the requirement on $g$ to (\ref{eq:functionalg}), that is, 
\begin{equation*}
g(x,y)g(y,z)=g(x,v)g(v,z)
\end{equation*}
for all $v,x,y,z\in \Z$.

Next, we show that if $g$ is neither a trivial nor semi-trivial solution, then $g$ must be of the form $g(x,y)=\frac{ch(x)}{h(y)}$, where $c$ is a constant and $h:\Z\to\Z\setminus\{0\}$ is a function.

Firstly, we note that if for some $a\in\Z$, $g(a,\cdot)\equiv 0$, then since $g(x,y)g(y,z)=g(x,a)g(a,z)$ for all $x,y,z$, we have that for all $y$ either $g(\cdot,y)\equiv 0$ or $g(y,\cdot)\equiv 0$. Let $A=\{y\mid g(y,\cdot)\equiv 0\}$, $B=\{y\mid g(\cdot,y)\equiv 0\}$. We see that $A\cup B=\Z$ and $g$ is a semi-trivial solution. Similarly, if for some $b\in\Z$ we have $g(\cdot,b)\equiv 0$, then $g$ is necessarily semi-trivial.

Suppose $g$ is not semi-trivial. Let $a\in\Z$ and define $h(x):=g(x,a)$. Since $g(\cdot,a)$ is not identically zero, there exists $b$ such that $h(b)\neq 0$. We have
\begin{equation*}
g(x,y)h(y)=g(x,y)g(y,a)=g(x,a)g(a,a)=h(x)h(a).
\end{equation*}

Suppose $h(y)=0$ for some $y$. Then $h(x)h(a)=0$ for all $x$, in particular $h(a)=0$. Then $g(\cdot,b)h(b)\equiv 0$ implies $g(\cdot,b)\equiv 0$. This contradicts that $g$ is not semi-trivial. Thus, $h(y)\neq 0$ for all $y$ and hence \[g(x,y)=\frac{h(x)h(a)}{h(y)}=\frac{ch(x)}{h(y)},\] where $c:=h(a)$.

We have shown that all non-trivial $g$ satisfying Equation \ref{eq:functionalg} has the form $g(x,y)=\frac{ch(x)}{h(y)}$, where $c$ is a constant and $h:\Z\to\Z\setminus\{0\}$ is a function.

This implies that $\phi'(\sigma,d_i\sigma)=\frac{ch(w(\sigma))}{h(w(d_i\sigma))}$.
\end{proof}

\section{$\phi$-Weighted Homology and Cohomology}
Let $G$ be an $R$-module, where $R$ is a commutative ring with unity. Let $K$ be a simplicial complex and $\phi:K\times K\to R$ be a weight function. Let $\partial_n: C_n(K;G)\to C_{n-1}(K;G)$ be the $n$-th $\phi$-weighted boundary map. (Note that the definition of $\partial_n$ depends on the choice of $\phi$.)

When $G=\mathbb{Z}$, we omit $G$ from the notation. For instance, we write $C_n(K)=C_n(K;\Z)$.
\begin{defn}
We define the \emph{$n$-th $\phi$-weighted homology group} of $K$, with coefficients in $G$, to be
\begin{equation*}
H_n(K,\phi;G)=\ker\partial_n/\Ima\partial_{n+1},
\end{equation*}
where $\partial_n$ is the $n$-th $\phi$-weighted boundary map.
\end{defn}

\begin{remark}
We emphasize in the notation $H_n(K,\phi;G)$ that the weighted homology group depends on the weight function $\phi$. In general, different weight functions may lead to different weighted homology.
\end{remark}

\begin{defn}
\label{defn:cochaingroup}
Let $R$ be a commutative ring with unity and $G$ be an $R$-module. The group of \emph{$n$-dimensional cochains} of $K$, with coefficients in $G$, is the group
\begin{equation*}
C^n(K;G)=\Hom_R(C_n(K;R),G).
\end{equation*}
\end{defn}

\begin{remark}
By the tensor-hom adjunction, we have
\begin{equation*}
\begin{split}
\Hom_R(C_n(K;R),G)&\cong\Hom_R(C_n(K)\otimes_\Z R,G)\\
&\cong\Hom_\Z(C_n(K),\Hom_R(R,G))\\
&\cong\Hom_\Z(C_n(K),G).
\end{split}
\end{equation*}
Hence, we note that Definition \ref{defn:cochaingroup} is independent of the choice of the ring $R$. Nevertheless, we keep the definition with emphasis on $R$ so as to allow more general weight functions $\phi:K\times K\to R$, instead of restricting to weight functions $\phi:K\times K\to\Z$.
\end{remark}

\begin{defn}
Let $\phi:K\times K\to R$ be a weight function and $\partial$ be the $\phi$-weighted boundary operator. The \emph{$\phi$-weighted coboundary operator} $\delta_n: C^n(K;G)\to C^{n+1}(K;G)$ is defined to be the dual of the $\phi$-weighted boundary operator $\partial_{n+1}: C_{n+1}(K;R)\to C_n(K;R)$. That is, if $f: C_n(K;R)\to G$ is a $n$-dimensional cochain, then
\begin{equation*}
\delta_n(f)=f\circ\partial_{n+1}.
\end{equation*}

Suppose $c_n\in C_n(K;R)$ and $f\in C^n(K;G)$. We commonly use the notation $\langle f,c_n\rangle$ to denote $f(c_n)$. With this notation, the definition of the weighted coboundary operator becomes
\begin{equation}
\label{eqn:coboundary}
\langle\delta_n f,c_{n+1}\rangle=\langle f,\partial_{n+1}c_{n+1}\rangle.
\end{equation}
\end{defn}

\begin{prop}
Let $\phi: K\times K\to R$ be the weight function, and let $\partial_n: C_n(K;R)\to C_{n-1}(K;R)$ be the $\phi$-weighted boundary map. The formula for the $\phi$-weighted coboundary operator $\delta$ is 
\begin{equation*}
\langle\delta_nf,\sigma\rangle=\sum_{i=0}^{n+1}(-1)^i\phi(\sigma,d_i\sigma)\langle f,d_i\sigma\rangle,
\end{equation*}
where $f\in C^n(K;G)$, $\sigma=[v_0,\dots,v_{n+1}]$ is a $(n+1)$-simplex, and $d_i$ is the $i$-th face map.
\end{prop}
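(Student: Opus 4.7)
The plan is a short direct computation that unpacks the two definitions involved: the weighted coboundary as the dual of the weighted boundary, and the explicit formula for $\partial_{n+1}$ from Definition \ref{defn:weightbm}.

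First I would start from the defining equation \eqref{eqn:coboundary},
\begin{equation*}
\langle \delta_n f, \sigma\rangle = \langle f, \partial_{n+1}\sigma\rangle,
\end{equation*}
where $\sigma = [v_0, \dots, v_{n+1}]$ is regarded as the chain $1_R\cdot \sigma \in C_{n+1}(K;R)$. Next I would substitute the formula for $\partial_{n+1}$ on a single generator given by Definition \ref{defn:weightbm}:
\begin{equation*}
\partial_{n+1}\sigma = \sum_{i=0}^{n+1}(-1)^i \phi(\sigma, d_i\sigma)\, d_i\sigma.
\end{equation*}

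Then I would use the fact that $f \colon C_n(K;R) \to G$ is an $R$-module homomorphism, so that it is additive and commutes with scalar multiplication by elements of $R$ (here by the weights $\phi(\sigma, d_i\sigma) \in R$ and the signs $(-1)^i \in \Z \subseteq R$). Applying $f$ term by term yields
\begin{equation*}
\langle f, \partial_{n+1}\sigma\rangle = \sum_{i=0}^{n+1}(-1)^i \phi(\sigma, d_i\sigma)\langle f, d_i\sigma\rangle,
\end{equation*}
which is the claimed formula.

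There is essentially no obstacle here; the only minor points worth being careful about are making sure $R$-linearity (rather than mere $\Z$-linearity) of $f$ is invoked, since the coefficients $\phi(\sigma, d_i\sigma)$ live in $R$, and noting that the formula remains valid on any positively oriented simplex because the face maps $d_i$ and the values $\phi(\sigma, d_i\sigma)$ have already been defined with respect to the fixed ordering of vertices. No case analysis on orientation or on whether $d_i\sigma$ is an actual face is needed, thanks to the convention from Remark \ref{remark:notface}.
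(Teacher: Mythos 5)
Your proposal is correct and follows exactly the paper's own argument: apply the defining relation $\langle\delta_n f,\sigma\rangle=\langle f,\partial_{n+1}\sigma\rangle$, expand $\partial_{n+1}\sigma$ via Definition \ref{defn:weightbm}, and use $R$-linearity of $f$ to pull out the coefficients. No differences worth noting.
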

\begin{proof}
The proof follows directly from (\ref{eqn:coboundary}). To be precise, we have
\begin{equation*}
\begin{split}
\langle\delta_n f,\sigma\rangle&=\langle f,\sum_{i=0}^{n+1}(-1)^i\phi(\sigma,d_i\sigma)d_i\sigma\rangle\\
&=\sum_{i=0}^{n+1}(-1)^i\phi(\sigma,d_i\sigma)\langle f,d_i\sigma\rangle.
\end{split}
\end{equation*}
The last equality follows from the fact that $f$ is a $R$-module homomorphism.
\end{proof}

\begin{defn}
We define the $n$-th $\phi$-weighted cohomology group of $K$, with coefficients in $G$, to be
\[H^n(K,\phi;G)=\ker\delta_n/\Ima\delta_{n-1},\]
where $\delta_n$ is the $n$-th $\phi$-weighted coboundary operator.
\end{defn}

\begin{remark}
Note that $\delta_{n}=0$, $C^n(K;G)=0$ for $n\leq -1$. Hence, $H^{n}(K,\phi;G)=0$ for $n\leq -1$.
\end{remark}


\subsection{Difference between $\phi$-Weighted Cohomology Group and Usual Cohomology Group with Real or Complex Coefficients}
Let $\F=\R$ or $\C$. Let $\phi:K\times K\to\F$ be a weight function. We study the difference between the $\phi$-weighted cohomology group ${H}^n(K,\phi;\F)$ and the usual cohomology group ${H}^n(K;\F)$.


\begin{remark}
We observe that Proposition \ref{prop:integercond} still holds for $G=\F$, $R=\F$. Hence, a function $\phi:K\times K\to\F$ is a weight function if and only if (\ref{eq:integercond}) holds for all $\sigma\in K$ and all $j<i$.
\end{remark}

We give an example to show that ${H}^n(K,\phi;\F)$ and ${H}^n(K;\F)$ may be different.

\begin{eg}
\label{eg:diffweighted}
Consider $K=\{[v_0],[v_1],[v_2],[v_0,v_1],[v_0,v_2],[v_1,v_2]\}$. Let $\phi:K\times K\to\F$ be the weight function such that:
\begin{equation*}
\begin{alignedat}{5}
&&\phi([v_0,v_1],[v_0])=2, \qquad&&\phi([v_0,v_1],[v_1])=1, \qquad&&\phi([v_1,v_2],[v_1])=1,\\
&&\phi([v_1,v_2],[v_2])=1, \qquad&&\phi([v_0,v_2],[v_0])=1, \qquad&&\phi([v_0,v_2],[v_2])=1.
\end{alignedat}
\end{equation*}
The general 0-cochain is of the form $c^0=r_0[v_0]^*+r_1[v_1]^*+r_2[v_2]^*$, where $r_0,r_1,r_2\in\F$. We calculate that 
\begin{equation*}
\begin{split}
\langle\delta_0 c^0,[v_0,v_1]\rangle&=\phi([v_0,v_1],[v_1])\langle c^0,[v_1]\rangle-\phi([v_0,v_1],[v_0])\langle c^0,[v_0]\rangle\\
&=r_1-2r_0.
\end{split}
\end{equation*}

Similarly, $\langle\delta_0c^0,[v_0,v_2]\rangle=r_2-r_0$ and $\langle\delta_0c^0,[v_1,v_2]\rangle=r_2-r_1$. Hence, if $c^0\in\ker\delta_0$, then $r_2=r_0=r_1$ and $r_1=2r_0$. This implies that $r_0=r_1=r_2=0$. Hence $\ker\delta_0\cong 0$.

We conclude that the 0-th $\phi$-weighted cohomology group is ${H}^0(K,\phi;\F)\cong 0$. However since $K$ is path-connected, the 0-th usual cohomology group is ${H}^0(K;\F)\cong\F$.
\end{eg}

\begin{prop}
\label{prop:dimH}
Let $\delta$ denote the $\phi$-weighted coboundary operator. Let $\dim V:=\dim_\F V$ denote the dimension of an $\F$-vector space $V$. We have that 
\begin{equation*}
\dim{H}^n(K,\phi;\F)=\dim C^n(K;\F)-\rank\delta_n-\rank\delta_{n-1}.
\end{equation*}
\end{prop}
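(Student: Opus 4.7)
The plan is to apply the rank–nullity theorem twice, using only the definition $H^n(K,\phi;\F) = \ker\delta_n / \Ima\delta_{n-1}$ and the fact that we are working over a field $\F$, so all subspaces and quotients of finite-dimensional $\F$-vector spaces behave dimensionally as expected.

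First I would note that $C^n(K;\F)$ is finite-dimensional (since $K$ has finitely many $n$-simplices, assuming the standard setting; otherwise the identity should be read as an equality of possibly infinite cardinals, but the relation still holds formally). Applying rank–nullity to the linear map $\delta_n : C^n(K;\F) \to C^{n+1}(K;\F)$ gives
\begin{equation*}
\dim \ker\delta_n = \dim C^n(K;\F) - \rank\delta_n.
\end{equation*}

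Next, since $\Ima\delta_{n-1} \subseteq \ker\delta_n$ (this uses $\delta_n\delta_{n-1} = 0$, which is the cochain analogue of $\partial_{n-1}\partial_n=0$ proved earlier in the paper, obtained by dualising), the quotient $H^n(K,\phi;\F) = \ker\delta_n/\Ima\delta_{n-1}$ makes sense as an $\F$-vector space and has dimension
\begin{equation*}
\dim H^n(K,\phi;\F) = \dim \ker\delta_n - \dim \Ima\delta_{n-1} = \dim \ker\delta_n - \rank\delta_{n-1}.
\end{equation*}
Substituting the expression for $\dim\ker\delta_n$ from the previous step yields the claimed identity.

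There is no genuine obstacle here; the only thing worth being careful about is checking that $\Ima\delta_{n-1}\subseteq\ker\delta_n$, which follows from the fact that $\delta$ is defined as the dual of $\partial$ and $\partial^2 = 0$, so $\delta^2 = 0$ as well. With that in hand, the proof is a two-line consequence of rank–nullity.
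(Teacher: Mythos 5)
Your proof is correct and follows essentially the same route as the paper: the paper likewise writes $\dim H^n(K,\phi;\F)=\dim\ker\delta_n-\dim\Ima\delta_{n-1}$ and then applies rank--nullity to replace $\dim\ker\delta_n$ by $\dim C^n(K;\F)-\rank\delta_n$. Your extra remark verifying $\Ima\delta_{n-1}\subseteq\ker\delta_n$ via $\delta^2=0$ is a sensible precaution that the paper leaves implicit.
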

\begin{proof}
We have
\begin{equation*}
\begin{split}
\dim{H}^n(K,\phi;\F)&=\dim\ker\delta_n-\dim\Ima\delta_{n-1}\\
&=\dim C^n(K;\F)-\dim\Ima\delta_n-\dim\Ima\delta_{n-1}\\
&=\dim C^n(K;\F)-\rank\delta_n-\rank\delta_{n-1}.
\end{split}
\end{equation*}
\end{proof}

\section{$\phi$-Weighted Combinatorial Laplace Operator ($\phi$-Weighted Laplacian)}
\label{sec:laplace}
The Laplacian (also known as the Laplace operator) was first studied on graphs, and later generalized to simplicial complexes by Eckmann \cite{eckmann1944harmonische}. The Laplacian on simplicial complexes was also studied by Horak and Jost \cite{Horak2013}, and by Muhammad and Egerstedt \cite{muhammad2006control}. Laplacians have applications in networked control and sensing \cite{muhammad2006control} and statistical ranking \cite{jiang2011statistical}. In this section, we generalize the Laplacian on simplicial complexes by using the $\phi$-weighted coboundary map defined previously. In addition, we generalize to the case of complex coefficients. 

Let $\F=\R$ or $\C$. We let $G=\F$, $R=\F$ in this section, that is, both the chain group $C_n(K;G)$ and cochain group $C^n(K;G)$ have real or complex coefficients. Also, $G=\F$ is considered as a $\F$-module (vector space). If $z$ is a complex number, we denote by $\bar{z}$ the complex conjugate of $z$.

Let $K$ be a simplicial complex, with weight function $\phi:K\times K\to\F$. We choose inner products $(\cdot,\cdot)_{C^n}$ and $(\cdot,\cdot)_{C^{n+1}}$ on the vector spaces $C^n(K;\F)$ and $C^{n+1}(K;\F)$ respectively.

\begin{eg}[Example of inner product]
\label{eg:standardinner}
Let $f, g\in C^n(K;\F)$. Let $\{\sigma_1^*,\sigma_2^*,\dots,\sigma_m^*\}$ be a basis for $C^n(K;\F)$. Then $f=\sum_{i=1}^m r_i\sigma_i^*$ and $g=\sum_{i=1}^m s_i\sigma_i^*$ for some $r_i, s_i\in\F$. We define $(\cdot,\cdot)_{C^n}: C^n(K;\F)\times C^n(K;\F)\to\F$ by 
\begin{equation*}
(f,g)_{C^n}:=\sum_{i=1}^m r_i \overline{s_i}.
\end{equation*}

We can verify that $(\cdot,\cdot)_{C^n}$ is indeed an inner product, which we call the \emph{standard inner product} on $C^n(K;\F)$.
\end{eg}

\begin{defn}
The \emph{adjoint} $\delta_n^*: C^{n+1}(K;\F)\to C^n(K;\F)$ of the weighted coboundary operator $\delta_n$ is defined by
\begin{equation}
(\delta_n f,g)_{C^{n+1}}=(f,\delta_n^*g)_{C^n},
\end{equation}
for all $f\in C^n(K;\F)$ and $g\in C^{n+1}(K;\F)$.
\end{defn}

\subsection{Matrix Representation of the Adjoint}
\begin{defn}
\label{defn:standardbasis}
Let $\F=\R$ or $\C$. Let $B_n=\{\sigma_1,\sigma_2,\dots,\sigma_m\}$ be the (ordered) set of $n$-simplices of $K$, which forms a basis for $C_n(K;\F)$. We call $B_n$ the \emph{standard ordered basis} for $C_n(K;\F)$. Let $B_{n+1}=\{\tau_1,\tau_2,\dots,\tau_l\}$ be the standard ordered basis for $C_{n+1}(K;\F)$. Then, with respect to the above ordered bases, the $\phi$-weighted boundary map $\partial_{n+1}$ can be represented as a $m\times l$ matrix with entries in $\F$, denoted as $[\partial_{n+1}]$.

Let $B_n^*=\{\sigma_1^*,\sigma_2^*,\dots,\sigma_m^*\}$ be the \emph{standard ordered dual basis} for $C^n(K;\F)$. Let $B_{n+1}^*=\{\tau_1^*,\tau_2^*,\dots,\tau_l^*\}$ be the standard ordered dual basis for $C^{n+1}(K;\F)$. The $\phi$-weighted coboundary map $\delta_n$ can be represented as a $l\times m$ matrix, denoted as $[\delta_n]$. Similarly, the adjoint $\delta_n^*$ with respect to the standard inner product, can be represented as a $m\times l$ matrix $[\delta_n^*]$.
\end{defn}

\begin{remark}
In this paper, the square bracket $[\partial]$ (resp.\ $[\delta]$) will refer to the matrix representation of a boundary operator (resp.\ coboundary operator) with respect to the standard ordered basis (resp.\ dual basis) as described in Definition \ref{defn:standardbasis}. Also, unless otherwise specified, $[\delta_n^*]$ will refer to the matrix representation of the adjoint $\delta_n^*$ with respect to the standard inner product. 
\end{remark}

\begin{prop}
\label{prop:matrixdelta}
Let $\partial$ and $\delta$ be the $\phi$-weighted boundary and coboundary operator respectively. Let $(\cdot,\cdot)_{C^n}$ be the standard inner product on $C^n(K;\F)$.
We have the following relationships between the matrix representations (with respect to the standard ordered bases):
\begin{align*}
[\delta_n]&=[\partial_{n+1}]^T\\
[\delta_n^*]&=[\delta_n]^\dagger,
\end{align*}
where $A^T$ and $A^\dagger$ denote the transpose and conjugate transpose of a matrix $A$ respectively.
\end{prop}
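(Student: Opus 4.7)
The plan is to verify the two matrix identities separately, each by evaluating both sides on the standard (dual) basis elements and comparing matrix entries. The key is simply to chase the definitions carefully, keeping track of which index labels rows and which labels columns.

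For the first identity $[\delta_n]=[\partial_{n+1}]^T$, I would write $\partial_{n+1}(\tau_j)=\sum_{i=1}^{m}a_{ij}\sigma_i$, so that $a_{ij}=[\partial_{n+1}]_{ij}$. Applying the defining relation $\delta_n(\sigma_i^*)=\sigma_i^*\circ\partial_{n+1}$ to a basis element $\tau_j$ gives $\langle\delta_n\sigma_i^*,\tau_j\rangle=\sigma_i^*(\partial_{n+1}\tau_j)=a_{ij}$. On the other hand, expanding $\delta_n(\sigma_i^*)=\sum_{j=1}^{l}b_{ji}\tau_j^*$ in the dual basis gives $\langle\delta_n\sigma_i^*,\tau_j\rangle=b_{ji}=[\delta_n]_{ji}$. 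Matching these yields $[\delta_n]_{ji}=[\partial_{n+1}]_{ij}$, which is exactly the transpose relation.

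For the second identity $[\delta_n^*]=[\delta_n]^\dagger$, I would exploit the fact that, by the definition of the standard inner product in Example \ref{eg:standardinner}, the standard ordered dual bases $\{\sigma_i^*\}$ and $\{\tau_j^*\}$ are orthonormal in $C^n(K;\F)$ and $C^{n+1}(K;\F)$ respectively. Writing $[\delta_n]_{ji}=(\delta_n\sigma_i^*,\tau_j^*)_{C^{n+1}}$ and $[\delta_n^*]_{ij}=(\delta_n^*\tau_j^*,\sigma_i^*)_{C^n}$ (both obtained by reading off the coefficient against an orthonormal basis vector), the defining adjoint relation $(\delta_n f,g)_{C^{n+1}}=(f,\delta_n^*g)_{C^n}$ applied to $f=\sigma_i^*$ and $g=\tau_j^*$ gives
\begin{equation*}
[\delta_n]_{ji}=(\delta_n\sigma_i^*,\tau_j^*)_{C^{n+1}}=(\sigma_i^*,\delta_n^*\tau_j^*)_{C^n}=\overline{(\delta_n^*\tau_j^*,\sigma_i^*)_{C^n}}=\overline{[\delta_n^*]_{ij}},
\end{equation*}
which is precisely $[\delta_n^*]=[\delta_n]^\dagger$.

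There is no genuine obstacle in this argument; it is purely bookkeeping of definitions. The only place where care is required is in being consistent about the convention that the matrix of a linear map sends a coordinate column vector to a coordinate column vector, so that the columns of $[\partial_{n+1}]$ store $\partial_{n+1}(\tau_j)$ and the $(j,i)$-entry of $[\delta_n]$ is the $\tau_j^*$-coefficient of $\delta_n(\sigma_i^*)$. Once that convention is fixed, both identities drop out immediately.
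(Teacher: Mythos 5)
Your proposal is correct and follows essentially the same route as the paper's own proof: both identities are obtained by reading off matrix entries against the standard (dual) bases, using orthonormality of the elementary cochains under the standard inner product and the defining adjoint relation, with only cosmetic differences in index labeling. No gaps.
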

\begin{proof}
Write $[\delta_n]=[\alpha_{ij}]$. Then we have $\delta_n\sigma_j^*=\sum_{i=1}^l \alpha_{ij}\tau_i^*$ for $1\leq j\leq m$.

Note that since $B_{n+1}^*$ is an orthonormal basis, we have $(\delta_n\sigma_j^*,\tau_i^*)_{C^{n+1}}=\alpha_{ij}$ for $1\leq i\leq l$, $1\leq j\leq m$. 

Write $[\delta_n^*]=[\beta_{ij}]$. Similarly, we have $\delta_n^*\tau_j^*=\sum_{i=1}^m\beta_{ij}\sigma_i^*$ for $1\leq j\leq l$. Since $B_n^*$ is also an orthonormal basis, we have $(\delta_n^*\tau_j^*,\sigma_i^*)_{C^n}=\beta_{ij}$
for $1\leq i\leq m$, $1\leq j\leq l$.

Then by the definition of the adjoint,
\begin{equation*}
\beta_{ij}=\overline{(\sigma_i^*,\delta_n^*\tau_j^*)}_{C^n}=\overline{(\delta_n\sigma_i^*,\tau_j^*)}_{C^{n+1}}=\overline{\alpha_{ji}}.
\end{equation*}
Hence $[\delta_n^*]=[\delta_n]^\dagger$.

Next, we write $[\partial_{n+1}]=[\gamma_{ij}]$. By the definitions of $[\delta_n]$ and $[\partial_{n+1}]$ we have
\begin{equation*}
\alpha_{ij}=(\delta_n\sigma_j^*)(\tau_i)=\sigma_j^*(\partial_{n+1}(\tau_i))=\gamma_{ji}.
\end{equation*}
Hence $[\delta_n]^T=[\partial_{n+1}]$.
\end{proof}

\subsection{$\phi$-Weighted Laplace operator}
\begin{defn}
\label{defn:weightedlap}
Let $\F=\R$ or $\C$. We define the \emph{$n$-dimensional $\phi$-weighted Laplace operator} (also called the \emph{$\phi$-weighted Laplacian}) $\Delta_n: C^n(K;\F)\to C^n(K;\F)$ by
\begin{equation*}
\Delta_n=\delta_{n-1}\delta_{n-1}^*+\delta_n^*\delta_n,
\end{equation*}
where $\delta_n$ is the $\phi$-weighted coboundary operator, and $\delta_n^*$ is the adjoint taken relative to a chosen inner product.
\end{defn}

We now prove a generalization of the discrete Hodge theorem (cf.\ \cite{eckmann1944harmonische},\cite[p.~308]{Horak2013}) to the weighted case.
\begin{theorem}
\label{thm:mainiso}
Let $K$ be a  simplicial complex, $\phi:K\times K\to\F$ be a weight function, and $\Delta_n$ be the $n$-dimensional $\phi$-weighted Laplace operator. Then, we have
\begin{equation*}
\ker\Delta_n\cong{H}^n(K,\phi;\F).
\end{equation*}
\end{theorem}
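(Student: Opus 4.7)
The plan is to mimic the classical discrete Hodge decomposition argument, adapted to the $\phi$-weighted setting and to the possibly complex coefficient field $\F$. The cochain spaces $C^n(K;\F)$ are finite-dimensional inner-product spaces, and the coboundary operators satisfy $\delta_n \delta_{n-1} = 0$ (dual to $\partial_n \partial_{n+1} = 0$, which we already have), so all standard linear-algebraic tools are available.

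First I would show the pointwise characterization
\begin{equation*}
\ker \Delta_n \;=\; \ker \delta_n \cap \ker \delta_{n-1}^*.
\end{equation*}
The $\supseteq$ direction is immediate from the definition $\Delta_n = \delta_{n-1}\delta_{n-1}^* + \delta_n^*\delta_n$. For $\subseteq$, take $f \in \ker \Delta_n$ and use the adjoint relation to compute
\begin{equation*}
(\Delta_n f, f)_{C^n} \;=\; (\delta_{n-1}^* f,\, \delta_{n-1}^* f)_{C^{n-1}} + (\delta_n f,\, \delta_n f)_{C^{n+1}} \;=\; \lVert \delta_{n-1}^* f \rVert^2 + \lVert \delta_n f \rVert^2.
\end{equation*}
Positive-definiteness of the inner products then forces $\delta_n f = 0$ and $\delta_{n-1}^* f = 0$.

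Next I would identify $\ker \delta_{n-1}^*$ with the orthogonal complement of $\Ima \delta_{n-1}$ in $C^n(K;\F)$: an element $g$ satisfies $\delta_{n-1}^* g = 0$ iff $(f, \delta_{n-1}^* g)_{C^n} = (\delta_{n-1} f, g)_{C^n} = 0$ for every $f \in C^{n-1}(K;\F)$, which is exactly $g \perp \Ima \delta_{n-1}$. Combining this with the previous step,
\begin{equation*}
\ker \Delta_n \;=\; \ker \delta_n \,\cap\, (\Ima \delta_{n-1})^\perp.
\end{equation*}

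Finally, because $\delta_n \delta_{n-1} = 0$ we have $\Ima \delta_{n-1} \subseteq \ker \delta_n$, so inside the finite-dimensional inner-product space $\ker \delta_n$ we can form the orthogonal decomposition
\begin{equation*}
\ker \delta_n \;=\; \Ima \delta_{n-1} \,\oplus\, \bigl(\ker \delta_n \cap (\Ima \delta_{n-1})^\perp\bigr).
\end{equation*}
Quotienting by $\Ima \delta_{n-1}$ gives the desired linear isomorphism
\begin{equation*}
\ker \Delta_n \;\cong\; \ker \delta_n / \Ima \delta_{n-1} \;=\; H^n(K,\phi;\F).
\end{equation*}
I do not expect a genuine obstacle; the only place requiring mild care is the complex case, where one must check that the inner product is taken conjugate-linear in the right slot so that $\lVert \cdot \rVert^2 \geq 0$ and $(\delta f, g) = (f, \delta^* g)$ are both used consistently. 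Everything else is routine finite-dimensional Hilbert-space linear algebra combined with $\delta^2 = 0$.
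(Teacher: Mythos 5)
Your proposal is correct and follows essentially the same route as the paper: both establish $\ker\Delta_n=\ker\delta_n\cap\ker\delta_{n-1}^*$ via the computation $(\Delta_nf,f)=\lVert\delta_{n-1}^*f\rVert^2+\lVert\delta_nf\rVert^2$, identify $\ker\delta_{n-1}^*$ with $(\Ima\delta_{n-1})^\perp$, and then pass to the quotient $\ker\delta_n/\Ima\delta_{n-1}$. The only cosmetic slip is a subscript in your adjointness identity, where $(f,\delta_{n-1}^*g)$ should carry the inner product on $C^{n-1}$ rather than $C^n$.
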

\begin{proof}
Since $\delta_n\delta_{n-1}=0$ and $\delta_{n-1}^*\delta_n^*=(\delta_n\delta_{n-1})^*=0$, thus $\Ima\delta_{n-1}\delta_{n-1}^*\subseteq\ker\delta_n^*\delta_n$ and $\Ima\delta_n^*\delta_n\subseteq\ker\delta_{n-1}\delta_{n-1}^*$.

Note that
\begin{equation*}
\ker\delta_{n-1}^*\cap\ker\delta_n\subseteq\ker\delta_{n-1}\delta_{n-1}^*\cap\ker\delta_{n}^*\delta_n\subseteq\ker\Delta_n.
\end{equation*}

Conversely, if $f\in\ker\Delta_n$, then
\begin{equation*}
\begin{split}
0&=(\Delta_nf,f)_{C^n}\\
&=(\delta_{n-1}\delta_{n-1}^*f,f)_{C^n}+(\delta_n^*\delta_nf,f)_{C^n}\\
&=(\delta_{n-1}^*f,\delta_{n-1}^*f)_{C^n}+(\delta_nf,\delta_nf)_{C^n}.
\end{split}
\end{equation*}
By positive-definiteness of inner product, we have $\delta_{n-1}^*f=\delta_n f=0$. Hence $f\in\ker\delta_{n-1}^*\cap\ker\delta_n$. We have shown $\ker\Delta_n\subseteq\ker\delta_{n-1}^*\cap\ker\delta_n$.

Hence,
\begin{equation}
\label{eqn:perpcong}
\begin{split}
\ker\Delta_n&=\ker\delta_{n-1}^*\cap\ker\delta_n\\
&=(\Ima\delta_{n-1})^\perp\cap\ker\delta_n\\
&\cong{H}^n(K,\phi;\F).
\end{split}
\end{equation}

The isomorphism in the last line of (\ref{eqn:perpcong}) can be constructed via the isomorphism
\begin{equation*}
\begin{split}
\Psi: (\Ima\delta_{n-1})^\perp\cap\ker\delta_n&\to{H}^n(K,\phi,\F)\\
f&\mapsto f+\Ima\delta_{n-1}.
\end{split}
\end{equation*}
\end{proof}

\subsection{Matrix Representation of the $\phi$-Weighted Laplacian}
\label{subsec:matweightlap}
In this section, we study the matrix representation of the $\phi$-weighted Laplacian $\Delta$ defined in Definition \ref{defn:weightedlap}.

If we adopt the approach of giving weight to the inner product \cite[p.~309]{Horak2013}, our $\phi$-weighted Laplacian generalizes \cite{Horak2013}. In particular, our $\phi$-weighted Laplacian reduces to the one in \cite{Horak2013} when $\phi$ is the identity weight function $\phi(\sigma, d_i\sigma)\equiv 1$ (cf.\ Example \ref{eg:id}).

\begin{prop}
\label{prop:ourupdown}
Let $A_i$ be the matrix corresponding to the $\phi$-weighted coboundary operator $\delta_i$ and let $A_i^\dagger$ be its conjugate transpose.

Then we have the following matrices corresponding to the respective operators:
\begin{align*}
[\delta_i^*\delta_i]&=A_i^\dagger A_i,\\
[\delta_{i-1}\delta_{i-1}^*]&=A_{i-1}A_{i-1}^\dagger,\\
[\Delta_i]&=A_{i-1}A_{i-1}^\dagger+A_i^\dagger A_i.
\end{align*}
\end{prop}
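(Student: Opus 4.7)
The plan is to derive each of the three identities directly from Proposition \ref{prop:matrixdelta} together with the elementary fact that, once one fixes ordered bases (and their duals) in each dimension as in Definition \ref{defn:standardbasis}, the matrix of a composition of linear maps is the product of the matrices and the matrix of a sum is the sum of the matrices. So this is essentially a bookkeeping argument with no substantive analytic content.

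First I would invoke Proposition \ref{prop:matrixdelta}, which gives $[\delta_i^*] = [\delta_i]^\dagger = A_i^\dagger$ and, applied in dimension $i-1$, $[\delta_{i-1}^*] = A_{i-1}^\dagger$. These are the only inputs that require any real work, and that work has already been done in the previous proposition.

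Next, viewing $\delta_i^*\circ\delta_i$ as an endomorphism of $C^i(K;\F)$ that factors through $C^{i+1}(K;\F)$, and using the standard ordered dual bases of these two spaces simultaneously in the domain/codomain slots of both factors, the functoriality of matrix representation gives $[\delta_i^*\delta_i] = [\delta_i^*]\,[\delta_i] = A_i^\dagger A_i$. An identical argument applied to $\delta_{i-1}\circ\delta_{i-1}^*$, regarded as an endomorphism of $C^i(K;\F)$ factoring through $C^{i-1}(K;\F)$, yields $[\delta_{i-1}\delta_{i-1}^*] = A_{i-1}A_{i-1}^\dagger$. Finally, since Definition \ref{defn:weightedlap} expresses $\Delta_i$ as a sum of these two endomorphisms of the same space $C^i(K;\F)$, the identity $[\Delta_i] = A_{i-1}A_{i-1}^\dagger + A_i^\dagger A_i$ follows by linearity of the matrix representation.

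There is no real obstacle here; the only point to watch is basis bookkeeping, i.e.\ ensuring that each compositional slot uses the dual basis in the correct dimension so that composition genuinely corresponds to matrix multiplication. This is automatic once one consistently uses the standard ordered dual bases of Definition \ref{defn:standardbasis}.
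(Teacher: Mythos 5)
Your proposal is correct and follows essentially the same route as the paper, which simply cites Proposition \ref{prop:matrixdelta} and Definition \ref{defn:weightedlap}; you have merely spelled out the routine facts that composition of maps corresponds to matrix multiplication and that matrix representation is linear. Nothing further is needed.
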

\begin{proof}
The proof follows from Proposition \ref{prop:matrixdelta} and Definition \ref{defn:weightedlap}.
\end{proof}

We generalize the following proposition, which is stated but not proved in \cite[p.~309]{Horak2013}, to complex numbers.
\begin{prop}[cf.\ {\cite[p.~309]{Horak2013}}]
\label{prop:weightedinner}
Let $\F=\R$ or $\C$. Let $w:K\to\R^+$ be a function. We say that the weight of a simplex $\sigma\in K$ is $w(\sigma)$. For any choice of inner product on the cochain group $C^i(K,\F)$, where elementary cochains form an orthogonal basis, there exists a function $w$ such that
\begin{equation*}
(f,g)_{C^i}=\sum_\sigma w(\sigma)f(\sigma)\overline{g(\sigma)},
\end{equation*}
where $\sigma$ runs over the $i$-simplices of $K$. Furthermore, there is a one-to-one correspondence between functions $w:K\to\R^+$ and possible inner products on $C^i(K,\F)$ where elementary cochains are orthogonal.
\end{prop}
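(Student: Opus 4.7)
The plan is to identify the weight function $w$ directly from the inner product by setting $w(\sigma):=(\sigma^{*},\sigma^{*})_{C^{i}}$ for each $i$-simplex $\sigma$, and then to expand arbitrary cochains in the elementary cochain basis and exploit sesquilinearity together with the orthogonality hypothesis to recover the claimed formula.

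First I would verify that $w$ has the correct target. Since $(\cdot,\cdot)_{C^{i}}$ is positive definite and $\sigma^{*}\neq 0$, the quantity $(\sigma^{*},\sigma^{*})_{C^{i}}$ is a strictly positive real number, so $w$ indeed defines a function $K\to\R^{+}$. Next, given arbitrary $f,g\in C^{i}(K,\F)$, I would write them in the elementary cochain basis as $f=\sum_{\sigma}f(\sigma)\sigma^{*}$ and $g=\sum_{\tau}g(\tau)\tau^{*}$, where $f(\sigma)$ denotes the coefficient and agrees with the value of $f$ on $\sigma$ by definition of the dual basis. Applying sesquilinearity (following the convention of Example \ref{eg:standardinner}, conjugate-linear in the second slot) I would expand
\begin{equation*}
(f,g)_{C^{i}}=\sum_{\sigma,\tau}f(\sigma)\,\overline{g(\tau)}\,(\sigma^{*},\tau^{*})_{C^{i}},
\end{equation*}
and invoke the orthogonality hypothesis to collapse the double sum to $\sum_{\sigma}w(\sigma)f(\sigma)\overline{g(\sigma)}$, as required.

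For the bijective correspondence, I would define the reverse assignment explicitly: given $w\colon K\to\R^{+}$, set $\langle f,g\rangle_{w}:=\sum_{\sigma}w(\sigma)f(\sigma)\overline{g(\sigma)}$ and verify sesquilinearity, conjugate symmetry, and positive definiteness; conjugate symmetry uses that each $w(\sigma)$ is real, and positive definiteness uses $w(\sigma)>0$. Elementary cochains are orthogonal under $\langle\cdot,\cdot\rangle_{w}$ because $\langle\sigma^{*},\tau^{*}\rangle_{w}=w(\sigma)\delta_{\sigma\tau}$. Finally I would check that the two constructions are mutually inverse: starting from a $w$, the associated inner product satisfies $(\sigma^{*},\sigma^{*})=w(\sigma)$, recovering $w$; starting from an arbitrary orthogonal inner product, the expansion in the previous paragraph shows it coincides with $\langle\cdot,\cdot\rangle_{w}$ for the choice $w(\sigma)=(\sigma^{*},\sigma^{*})_{C^{i}}$.

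The only real obstacle is a bookkeeping issue about the sesquilinear convention: one must be careful that $w(\sigma)$ is a real scalar, so that it behaves identically whether it is extracted from the first or the second slot of the inner product. This is precisely what allows both the collapse of the double sum and the conjugate symmetry of $\langle\cdot,\cdot\rangle_{w}$ to go through uniformly for $\F=\R$ and $\F=\C$.
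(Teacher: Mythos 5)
Your proposal is correct and follows essentially the same route as the paper: define $w(\sigma):=(\sigma^*,\sigma^*)_{C^i}$, expand $f$ and $g$ in the elementary cochain basis, and use orthogonality to collapse the sum. You spell out the verification of the one-to-one correspondence in more detail than the paper (which simply asserts it), but the underlying argument is identical.
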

\begin{proof}
Let $\{\sigma_1^*,\dots,\sigma_m^*\}$ be the standard ordered dual basis for $C^i(K;\F)$. We write $f=\sum_{j=1}^m r_j\sigma_j^*$ and $g=\sum_{j=1}^m s_j\sigma_j^*$. Then \[(f,g)_{C^i}=\sum_{j=1}^m r_j\overline{s_j}(\sigma_j^*,\sigma_j^*)_{C^i},\] by orthogonality of the elementary cochains. If we define $w(\sigma)=(\sigma^*,\sigma^*)_{C^i}>0$, then \[(f,g)_{C^i}=\sum_\sigma w(\sigma)f(\sigma)\overline{g(\sigma)},\] where $\sigma$ runs over the $i$-simplices of $K$. Clearly, our choice of $w$ gives the desired one-to-one correspondence.
\end{proof}

\begin{remark}
When $w(\sigma)\equiv 1$ for all $\sigma\in K$, the inner product in Proposition \ref{prop:weightedinner} corresponds to the standard inner product (cf.\ Example \ref{eg:standardinner}), where the elementary cochains are \emph{orthonormal}.
\end{remark}

We now state the corresponding result of Proposition \ref{prop:ourupdown} from \cite{Horak2013}. To avoid confusion, we will let $\widetilde{\delta_i}$ denote the usual unweighted coboundary operator. We show that the proposition holds for cochain groups $C^i(K,\F)$, where $\F=\R$ or $\C$.

\begin{prop}[cf.\ {\cite[p.~310]{Horak2013}}]
\label{prop:theirupdown}
Let ${D_i}$ be the matrix corresponding to the usual unweighted coboundary operator $\widetilde{\delta_i}: C^i(K;\F)\to C^{i+1}(K;\F)$. Let $W_i$ be the \emph{diagonal} matrix representing the scalar product on $C^i$, then the $\mathcal{L}_i^{up}$ and $\mathcal{L}_i^{down}$ operators are expressed as:
\begin{align*}
[\mathcal{L}_i^{up}]&=W_i^{-1}{D_i^T}W_{i+1}{D_i},\\
[\mathcal{L}_i^{down}]&={D_{i-1}}W_{i-1}^{-1}{D_{i-1}^T}W_i.
\end{align*}

The matrix for the weighted $i$-dimensional combinatorial Laplace operator is then expressed as
\begin{equation*}
[\mathcal{L}_i]=[\mathcal{L}_i^{up}]+[\mathcal{L}_i^{down}].
\end{equation*}
\end{prop}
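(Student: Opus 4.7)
The plan is to compute the matrices $[\mathcal{L}_i^{up}]$ and $[\mathcal{L}_i^{down}]$ directly from their definitions $\mathcal{L}_i^{up} = \widetilde{\delta_i}^*\widetilde{\delta_i}$ and $\mathcal{L}_i^{down} = \widetilde{\delta_{i-1}}\widetilde{\delta_{i-1}}^*$. The crucial ingredient is the matrix representation of the adjoint $\widetilde{\delta_i}^*$ \emph{with respect to the weighted inner product} supplied by Proposition \ref{prop:weightedinner}, rather than the standard inner product used in Proposition \ref{prop:matrixdelta}. Once that matrix is in hand, the two formulas follow by a one-line matrix multiplication each, and the final formula for $[\mathcal{L}_i]$ follows from Definition \ref{defn:weightedlap}.

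First I would identify a cochain $f = \sum_j r_j \sigma_j^*$ with its coordinate column vector $\vec f = (r_1,\ldots,r_m)^T$ in the standard ordered dual basis. In these coordinates, the weighted inner product on $C^i(K;\F)$ reads $(f,g)_{C^i} = \vec g^{\,\dagger} W_i \vec f$, where $W_i = \diag(w(\sigma_1),\ldots,w(\sigma_m))$ is the positive real diagonal matrix from Proposition \ref{prop:weightedinner}, and the matrix of $\widetilde{\delta_i}$ in the standard bases is by definition $D_i$.

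Next I would derive $[\widetilde{\delta_i}^*]$ by unpacking the adjoint identity in coordinates. The defining relation $(\widetilde{\delta_i}f,g)_{C^{i+1}} = (f,\widetilde{\delta_i}^*g)_{C^i}$ translates to
\begin{equation*}
\vec g^{\,\dagger} W_{i+1} D_i \vec f = \vec g^{\,\dagger}[\widetilde{\delta_i}^*]^{\dagger} W_i \vec f
\end{equation*}
for all $\vec f, \vec g$. Equating matrices and using that $W_i$ is a real positive diagonal matrix (so $W_i^{\dagger} = W_i$ and $W_i^{-1}$ exists) together with the fact that $D_i$ has entries in $\{0,\pm 1\}\subset\R$ (so $D_i^{\dagger} = D_i^T$), I obtain $[\widetilde{\delta_i}^*] = W_i^{-1} D_i^T W_{i+1}$. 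Multiplying then gives $[\mathcal{L}_i^{up}] = [\widetilde{\delta_i}^*][\widetilde{\delta_i}] = W_i^{-1} D_i^T W_{i+1} D_i$; the same calculation with indices shifted down by one yields $[\mathcal{L}_i^{down}] = D_{i-1} W_{i-1}^{-1} D_{i-1}^T W_i$; and the sum is $[\mathcal{L}_i]$ by Definition \ref{defn:weightedlap}.

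The proof is essentially linear algebra and I do not expect a genuinely hard conceptual step, but the main bookkeeping hazard — and the one point where extending the real result of \cite{Horak2013} to $\F = \C$ must be handled with care — is the conjugation convention: since the inner product of Proposition \ref{prop:weightedinner} is conjugate-linear in the second argument, one must consistently place the $\dagger$ on $\vec g$ rather than on $\vec f$ and use $\dagger$ (not $T$) throughout the intermediate manipulation, only collapsing $\dagger$ to $T$ at the very end via the realness of $W_i$ and $D_i$. With this convention kept straight, the complex case produces exactly the same matrix formulas as the real case claimed in \cite{Horak2013}.
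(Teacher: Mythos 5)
Your proposal is correct and follows essentially the same route as the paper: both hinge on computing the matrix of the adjoint with respect to the weighted inner product, arriving at $[\widetilde{\delta_i}^*]=W_i^{-1}D_i^TW_{i+1}$, and then multiplying. The only difference is presentational --- the paper extracts the entries $\beta_{jk}$ one at a time using orthogonality of the elementary cochains, whereas you equate the two coordinate expressions of the adjoint identity as whole matrices; the conjugation bookkeeping (collapsing $\dagger$ to $T$ via realness of $W_i$ and $D_i$) is handled identically in both.
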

\begin{proof}
Let $B_i^*=\{\sigma_1^*,\dots,\sigma_m^*\}$ be the standard ordered dual basis for $C^i(K;\F)$. Let $B_{i+1}^*=\{\tau_1^*,\dots,\tau_l^*\}$ be the standard ordered dual basis for $C^{i+1}(K;\F)$.

Write $[\widetilde{\delta_i}^*]=[\beta_{jk}]$. We have $\widetilde{\delta_i}^*\tau_k^*=\sum_{j=1}^m\beta_{jk}\sigma_j^*$. Since $B_i^*$ is an orthogonal basis, we have
\begin{equation*}
(\widetilde{\delta_i}^*\tau_k^*,\sigma_j^*)_{C^i}=\beta_{jk}(\sigma_j^*,\sigma_j^*)_{C^i}=\beta_{jk}w(\sigma_j).
\end{equation*}

Write $D_i=[\alpha_{jk}]$. Similarly, we have $\widetilde{\delta_i}\sigma_k^*=\sum_{j=1}^l\alpha_{jk}\tau_j^*$ and $(\widetilde{\delta_i}\sigma_k^*,\tau_j^*)_{C^{i+1}}=\alpha_{jk}w(\tau_j)$.

Hence,
\begin{equation}
\label{eq:walphareal}
\begin{split}
\beta_{jk}&=\frac{1}{w(\sigma_j)}(\widetilde{\delta_i}^*\tau_k^*,\sigma_j^*)_{C^i}\\
&=\frac{1}{w(\sigma_j)}\overline{(\widetilde{\delta_i}\sigma_j^*,\tau_k^*)_{C^{i+1}}}\\
&=\frac{\overline{w(\tau_k)}}{w(\sigma_j)}\overline{\alpha_{kj}}\\
&=\frac{w(\tau_k)}{w(\sigma_j)}\alpha_{kj}.
\end{split}
\end{equation}
The last equality in (\ref{eq:walphareal}) follows since $w(\tau_k)$ is real and the matrix $[\alpha_{jk}]$ consists only of entries in $\{-1,0,1\}$.

This implies that $[\widetilde{\delta_i}^*]=W_i^{-1}D_i^TW_{i+1}$, where $W_i=\diag(w(\sigma_1),\dots,w(\sigma_m))$ and $W_{i+1}=\diag(w(\tau_1),\dots,w(\tau_l))$. Hence
\begin{equation*}
[\mathcal{L}_i^{up}]=[\widetilde{\delta_i}^*][\widetilde{\delta_i}]=W_i^{-1}D_i^TW_{i+1}D_i.
\end{equation*}
The case for $[\mathcal{L}_i^{down}]$ is similar.
\end{proof}

Let $\delta$ denote the $\phi$-weighted coboundary map. It is clear that $[\delta_i^*\delta_i]$ in Proposition \ref{prop:ourupdown} is the counterpart of $[\mathcal{L}_i^{up}]$ in Proposition \ref{prop:theirupdown}. Similarly, $[\delta_{i-1}\delta_{i-1}^*]$ is the counterpart of $[\mathcal{L}_i^{down}]$. We show by an example that even in the real case, it may not be possible to write $[\delta_i^*\delta_i]$ in the form $W_i^{-1}D_i^TW_{i+1}D_i$, hence our definition of the weighted Laplacian is new and different from \cite{Horak2013}. (For the complex case, it is clear that there is a difference since the matrix $[\delta_i^*\delta_i]$ can have complex entries while the matrix $W_i^{-1}D_i^TW_{i+1}D_i$ only has real entries.)

\begin{eg}
\label{eg:newlaplacian1}
Consider the 1-simplex $K=\{[v_0],[v_1],[v_0,v_1]\}$. Let $\phi:K\times K\to\R$ be a weight function such that $\phi([v_0,v_1],[v_0])=p$ and $\phi([v_0,v_1],[v_1])=q$, where $p,q\in\R$.

Let $p=2$, $q=3$. Then, $\partial_1([v_0,v_1])=3[v_1]-2[v_0]$. Hence, $[\partial_1]=\begin{bmatrix}-2\\3\end{bmatrix}$. By Proposition \ref{prop:matrixdelta}, $[\delta_0]=[\partial_1]^T=\begin{bmatrix}-2 &3\end{bmatrix}$ and $[\delta_0^*]=[\delta_0]^\dagger=\begin{bmatrix}-2\\3\end{bmatrix}$.

Let $D_i$ be the matrix corresponding to the usual unweighted coboundary operator $\widetilde{\delta_i}$. Then, $D_0=\begin{bmatrix}-1 &1\end{bmatrix}$ and $D_0^T=\begin{bmatrix}-1\\1\end{bmatrix}$.

Let $W_0^{-1}=\begin{bmatrix}a & 0\\0 & b\end{bmatrix}$ and $W_1=\begin{bmatrix}c\end{bmatrix}$.

Then, $[\delta_0^*\delta_0]=\begin{bmatrix}4 & -6\\-6 &9\end{bmatrix}$ and 
$[\mathcal{L}_0^{up}]=\begin{bmatrix}ac & -ac\\
-bc & bc
\end{bmatrix}$.

It is clear that $[\delta_0^*\delta_0]\neq[\mathcal{L}_0^{up}]$ for all $a,b,c\in\R$. This shows that $[\delta_0^*\delta_0]$ cannot be expressed in the form $W_0^{-1}D_0^TW_1D_0$ where $W_i$ is a diagonal matrix.
\end{eg}

\begin{eg}
We show that the counterexample in Example \ref{eg:newlaplacian1} still holds when $W_0$ and $W_1$ are allowed to be any symmetric positive definite matrices.

We note that if $W_0$ is symmetric positive definite, then $W_0$ is invertible and $W_0^{-1}$ is symmetric. 

Suppose $W_0^{-1}=\begin{bmatrix}a &d\\d & b\end{bmatrix}$ and $W_1=\begin{bmatrix}c\end{bmatrix}$. 

Then, $[\mathcal{L}_0^{up}]=W_0^{-1}D_0^TW_1D_0=\begin{bmatrix}-c(-a+d) & c(-a+d)\\-c(-d+b) & c(-d+b)\end{bmatrix}$.

Since $[\delta_0^*\delta_0]=\begin{bmatrix}4 &-6\\-6 &9\end{bmatrix}$, we note that $[\delta_0^*\delta_0]\neq [\mathcal{L}_0^{up}]$ for all $a,b,c\in\mathbb{R}$ since $4$ and $-6$ do not differ by a multiple of $-1$ but $-c(-a+d)$ and $c(-a+d)$ do.
\end{eg}

Next, we attempt to generalize \cite{Horak2013}. We note that if we just consider the $\phi$-weighted coboundary operator $\delta$ with the standard inner product, it is insufficient to generalize \cite{Horak2013}. That is, there exists $[\mathcal{L}_i^{up}]=W_i^{-1}D_i^TW_{i+1}D_i$ that cannot be written as $[\delta_i^*\delta_i]$ where $\delta_i$ is the $\phi$-weighted coboundary operator and $\delta_i^*$ is the adjoint corresponding to the standard inner product. We show this in the following example.

\begin{eg}
We use the same example as in Example \ref{eg:newlaplacian1}. Recall that in Example \ref{eg:newlaplacian1},
$[\mathcal{L}_0^{up}]=W_0^{-1}D_0^TW_1D_0=\begin{bmatrix}ac &-ac\\
-bc & bc\end{bmatrix}$.

Similar to Example \ref{eg:newlaplacian1}, we can calculate that
$[\delta_0^*\delta_0]=\begin{bmatrix}
p^2 &-pq\\
-pq & q^2
\end{bmatrix}$.

If we choose $a\neq b$ and $c\neq 0$, we see that $-ac\neq -bc$ hence $[\mathcal{L}_0^{up}]$ cannot be equal to $[\delta_0^*\delta_0]$.
\end{eg}

If we use the weighted inner product as described in Proposition \ref{prop:weightedinner}, together with our $\phi$-weighted coboundary operator, it turns out that we can generalize \cite{Horak2013}. We show this in Proposition \ref{prop:genhorak}.

\begin{prop}[Matrix Representation of $\phi$-Weighted Laplacian with Weighted Inner Product]
Let $\F=\R$ or $\C$. Let $W_i$ be the diagonal matrix representing the weighted inner product on $C^i(K;\F)$. Then the matrix representations of the $\phi$-weighted Laplacian operators with respect to the standard basis on $C^i(K;\F)$ are expressed as:
\begin{align*}
[\Delta_i^{up}&]=W_i^{-1}[\delta_i]^\dagger W_{i+1}[\delta_i]\\
[\Delta_i^{down}&]=[\delta_{i-1}]W_{i-1}^{-1}[\delta_{i-1}]^\dagger W_i,
\end{align*}
where $\delta_i$ refers to the $i$-th $\phi$-weighted coboundary operator.

The matrix representation of the $\phi$-weighted $i$-dimensional Laplacian is then expressed as
\begin{equation*}
[\Delta_i]=[\Delta_i^{up}]+[\Delta_i^{down}].
\end{equation*}
\end{prop}
\begin{proof}
The proof is similar to that of Proposition \ref{prop:theirupdown}. We remark that $[\delta_n]$ can have complex entries, and we have used the fact that $[\delta_n^*]=[\delta_n]^\dagger$ (Proposition \ref{prop:matrixdelta}).
\end{proof}

\begin{prop}
\label{prop:genhorak}
Let $\phi:K\times K\to\F$ be the identity weight function, i.e.\ $\phi(\sigma,d_i\sigma)\equiv 1$ for all $\sigma\in K$ and all $i$. Let $\delta$ denote the $\phi$-weighted coboundary operator. Then,
\begin{align*}
[\Delta_i^{up}]&=[\mathcal{L}_i^{up}]=W_i^{-1}D_i^TW_{i+1}D_i\\
[\Delta_i^{down}]&=[\mathcal{L}_i^{down}]=D_{i-1}W_{i-1}^{-1}D_{i-1}^TW_i,
\end{align*}
where $D_i$ is the matrix corresponding to the usual unweighted coboundary operator $\widetilde{\delta_i}$.
\end{prop}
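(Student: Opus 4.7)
The plan is to reduce the claim to a pair of direct matrix substitutions after identifying, at the matrix level, the $\phi$-weighted coboundary with the usual unweighted one when $\phi \equiv 1$. By Example \ref{eg:id}, the identity weight function collapses the $\phi$-weighted boundary formula to the standard signed sum of face maps, and dualizing gives $\delta_i = \widetilde{\delta_i}$. Hence the matrix of $\delta_i$ with respect to the standard ordered dual bases equals $D_i$, the matrix of the unweighted coboundary operator appearing in Proposition \ref{prop:theirupdown}.

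Next I would identify $[\delta_i^*]$. By Proposition \ref{prop:matrixdelta}, the adjoint with respect to the standard inner product satisfies $[\delta_i^*] = [\delta_i]^\dagger$. Combined with the previous step this gives $[\delta_i^*] = D_i^\dagger$, and since $D_i$ has all entries in $\{-1,0,1\} \subset \R$, the conjugate transpose coincides with the transpose, so $[\delta_i^*] = D_i^T$. In particular, the possibility of complex coefficients is handled automatically because $D_i$ is real.

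Substituting these identifications into the definitions yields
\begin{equation*}
[\Delta_i^{up}] = W_i^{-1}[\delta_i^*]W_{i+1}[\delta_i] = W_i^{-1} D_i^T W_{i+1} D_i = [\mathcal{L}_i^{up}],
\end{equation*}
and similarly
\begin{equation*}
[\Delta_i^{down}] = [\delta_{i-1}]W_{i-1}^{-1}[\delta_{i-1}^*]W_i = D_{i-1} W_{i-1}^{-1} D_{i-1}^T W_i = [\mathcal{L}_i^{down}],
\end{equation*}
completing the proof. There is no genuine obstacle here; the only subtle point to flag is that $[\delta_i^*]$ in the definition of $[\Delta_i^{up}]$ is the adjoint with respect to the \emph{standard} inner product (as fixed in the remark following Definition \ref{defn:standardbasis}), with the outer $W_i^{-1}$ and $W_{i+1}$ factors playing the role of converting it into the adjoint for the weighted inner product, exactly as in Proposition \ref{prop:theirupdown}.
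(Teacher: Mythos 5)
Your proposal is correct and follows essentially the same route as the paper's proof: identify $[\delta_i]=D_i$ via Example \ref{eg:id}, use Proposition \ref{prop:matrixdelta} to get $[\delta_i^*]=D_i^T$, and substitute into the definitions of $[\Delta_i^{up}]$ and $[\Delta_i^{down}]$. The only difference is that you spell out the conjugate-transpose-versus-transpose point and the role of the standard inner product explicitly, which the paper leaves implicit.
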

\begin{proof}
When $\phi$ is the identity weight function, the $\phi$-weighted coboundary operator $\delta_i$ reduces to the usual unweighted coboundary operator $\widetilde{\delta_i}$. Hence $[\delta_i]=D_i$ and $[\delta_i^*]=D_i^T$. Thus, $[\Delta_i^{up}]=[\mathcal{L}_i^{up}]$ and $[\Delta_i^{down}]=[\mathcal{L}_i^{down}]$.
\end{proof}

\section{Eigenvalues and Eigenfunctions of the Weighted Laplacian}
In this section we study the eigenvalues and eigenfunctions of the $\phi$-weighted Laplacian (Definition \ref{defn:weightedlap}) corresponding to the $\phi$-weighted coboundary map $\delta$.

\begin{defn}[{\cite[p.~140]{halmos2012finite}}]
A linear transformation $A$ on an inner product space is said to be non-negative (or positive semi-definite) if it is self-adjoint and if $(Ax,x)\geq 0$ for all $x$.
\end{defn}

\begin{theorem}[{\cite[p.~153]{halmos2012finite}}]
\label{thm:nonnegative}
If $A$ is a self-adjoint transformation on an inner product space, then every eigenvalue of $A$ is real. Furthermore, if $A$ is non-negative, then every eigenvalue of $A$ is non-negative.
\end{theorem}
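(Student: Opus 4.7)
The plan is to give the standard inner-product argument, which is short enough that the main obstacle is really just being careful with complex conjugation.

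First, I would start from an eigenpair $(\lambda, x)$ with $x \neq 0$ and $Ax = \lambda x$, and pair both sides with $x$ in the inner product:
\begin{equation*}
(Ax, x) = \lambda (x, x).
\end{equation*}
Since $(x,x) > 0$ by positive-definiteness of the inner product, if I can show that $(Ax,x)$ is real, then $\lambda = (Ax,x)/(x,x)$ must also be real.

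To see that $(Ax,x)$ is real, I would invoke self-adjointness: $(Ax, x) = (x, Ax)$. Using the conjugate-symmetry of the inner product, $(x, Ax) = \overline{(Ax, x)}$. Hence $(Ax, x) = \overline{(Ax, x)}$, so $(Ax,x) \in \R$. This proves the first claim.

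For the non-negativity statement, assume further that $A$ is non-negative, so $(Ay, y) \geq 0$ for every $y$ in the space. Specializing to $y = x$ gives $(Ax, x) \geq 0$. Combined with $(x,x) > 0$, the identity $\lambda = (Ax,x)/(x,x)$ from the first step immediately yields $\lambda \geq 0$, finishing the proof.

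The hard part, if there is one, is only notational: making sure to use the convention on which argument of the inner product is conjugate-linear (matching the convention used earlier in the paper, where $(f,g) = \sum r_i\overline{s_i}$), so that $(x,Ax) = \overline{(Ax,x)}$ holds with the correct sign. Given that all other steps are one-line manipulations, there is no substantive obstacle; the result is essentially bookkeeping around the self-adjointness identity and the positive-definiteness of the inner product.
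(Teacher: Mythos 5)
Your proof is correct and is the standard argument; note that the paper itself gives no proof of this statement, simply importing it from Halmos \cite[p.~153]{halmos2012finite}, and the argument you give (pairing the eigenvector equation with $x$, using conjugate symmetry and self-adjointness to get $(Ax,x)\in\R$, then dividing by $(x,x)>0$) is exactly the textbook proof being cited. Your care about which slot is conjugate-linear is consistent with the paper's convention $(f,g)=\sum r_i\overline{s_i}$, so there is nothing to fix.
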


\begin{cor}
The eigenvalues of the three $\phi$-weighted operators $\delta_{n-1}\delta_{n-1}^*$, $\delta_n^*\delta_n$ and $\Delta_n$ are all real and non-negative.
\end{cor}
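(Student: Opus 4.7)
The plan is to apply Theorem \ref{thm:nonnegative} to each of the three operators in turn, so the task reduces to verifying that each is self-adjoint and non-negative on the relevant cochain space equipped with its chosen inner product.

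First I would handle $\delta_n^*\delta_n$ and $\delta_{n-1}\delta_{n-1}^*$, which are structurally identical. For self-adjointness, note that $(\delta_n^*\delta_n)^* = \delta_n^*(\delta_n^*)^* = \delta_n^*\delta_n$, using that taking adjoints is an involution, and similarly $(\delta_{n-1}\delta_{n-1}^*)^* = \delta_{n-1}\delta_{n-1}^*$. For non-negativity, a one-line inner product computation suffices: for any $f \in C^n(K;\F)$,
\begin{equation*}
(\delta_n^*\delta_n f, f)_{C^n} = (\delta_n f, \delta_n f)_{C^{n+1}} \geq 0,
\end{equation*}
and analogously
\begin{equation*}
(\delta_{n-1}\delta_{n-1}^* f, f)_{C^n} = (\delta_{n-1}^* f, \delta_{n-1}^* f)_{C^{n-1}} \geq 0,
\end{equation*}
with non-negativity following from positive-definiteness of the inner product.

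Next I would treat $\Delta_n = \delta_{n-1}\delta_{n-1}^* + \delta_n^*\delta_n$. Since the sum of self-adjoint operators is self-adjoint, and the sum of non-negative operators is non-negative (both being immediate from linearity of adjoints and of inner products), $\Delta_n$ inherits both properties from its two summands.

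Finally, invoking Theorem \ref{thm:nonnegative} on each of the three operators yields that all their eigenvalues are real and non-negative. I do not anticipate any real obstacle here; the proof is essentially a direct bookkeeping application of the preceding theorem, and the only minor point to keep in mind is that the underlying inner product is the (possibly weighted) inner product fixed in Section \ref{sec:laplace}, so that positive-definiteness holds and the identity $(Af,g) = (f, A^*g)$ is available in the form used above.
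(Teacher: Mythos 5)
Your proposal is correct and follows the same route as the paper: the paper's proof simply states that one "can verify from the definition" that the three operators are self-adjoint and non-negative and then applies Theorem \ref{thm:nonnegative}, and your argument supplies exactly those verifications. The inner-product computations $(\delta_n^*\delta_n f,f)_{C^n}=(\delta_n f,\delta_n f)_{C^{n+1}}\geq 0$ and its analogue, together with the observation that $\Delta_n$ is a sum of self-adjoint non-negative operators, are precisely the omitted details.
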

\begin{proof}
We can verify from the definition of the three operators that they are self-adjoint and non-negative. Then, we apply Theorem \ref{thm:nonnegative} to conclude the proof.
\end{proof}

Next, we calculate the multiplicity of the eigenvalue zero in the spectrum of the three $\phi$-weighted operators $\delta_{n-1}\delta_{n-1}^*$, $\delta_n^*\delta_n$ and $\Delta_n$, where $\delta$ is the $\phi$-weighted coboundary map. This is the $\phi$-weighted version of Theorem 3.1 in \cite{Horak2013}. For a self-adjoint transformation on a finite-dimensional inner product space, the algebraic multiplicity of each eigenvalue is equal to its geometric multiplicity (cf.\ \cite[p.~154]{halmos2012finite}).

\begin{remark}
In the process of generalizing Theorem 3.1 in \cite{Horak2013}, we have spotted minor errata and have corrected them in our Theorem \ref{thm:multiplicity}. The summation $\sum_{j=0}^{i}$ in the statement of Theorem 3.1 \cite[p.~311]{Horak2013} should be starting from -1 instead of 0. This is because $\dim C^{-1}=\dim\R=1\neq 0$ to be consistent with their context of reduced cohomology. Also, $\dim\widetilde{H}^i$ should be replaced with $\dim C^i$ in the statement of Theorem 3.1 (ii) in \cite{Horak2013}.

It should be remarked that our Theorem \ref{thm:multiplicity} uses unreduced cohomology instead of reduced cohomology, hence both $\dim C^{-1}(K;\F)$ and $\dim{H}^{-1}(K,\phi;\F)$ are 0.
\end{remark}

\begin{theorem}[cf.\ {\cite[p.~311]{Horak2013}}]
\label{thm:multiplicity}
Let $\F=\R$ or $\C$. Let $\delta_n$ be the $n$-th $\phi$-weighted coboundary operator induced by the weight function $\phi:K\times K\to\F$. Let $\dim V:=\dim_\F V$ denote the dimension of an $\F$-vector space $V$.

The multiplicity of the eigenvalue zero for $\delta_{n-1}\delta_{n-1}^*$ is 
\begin{equation}
\label{eq:zeroeigen1}
\dim C^n(K;\F)-\sum_{j=0}^{n-1}(-1)^{n+j-1}(\dim C^j(K;\F)-\dim{H}^j(K,\phi;\F)).
\end{equation}

The multiplicity of the eigenvalue zero for $\delta_n^*\delta_n$ is 
\begin{equation}
\label{eq:zeroeigen2}
\dim C^n(K;\F)-\sum_{j=0}^n(-1)^{n+j}(\dim C^j(K;\F)-\dim{H}^j(K,\phi;\F)).
\end{equation}

The multiplicity of the eigenvalue zero for $\Delta_n$ is
\begin{equation}
\label{eq:zeroeigen3}
\dim{H}^n(K,\phi;\F).
\end{equation}
\end{theorem}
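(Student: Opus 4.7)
The plan is to handle the three statements in increasing order of difficulty: the formula for $\Delta_n$ is immediate from the weighted Hodge theorem, while the formulas for $\delta_{n-1}\delta_{n-1}^*$ and $\delta_n^*\delta_n$ reduce to a careful computation of $\rank\delta_{n-1}$ and $\rank\delta_n$ via a downward recursion through Proposition \ref{prop:dimH}.

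First I would dispatch (\ref{eq:zeroeigen3}): since the three operators are self-adjoint, algebraic and geometric multiplicities coincide, so the multiplicity of the eigenvalue $0$ for $\Delta_n$ is exactly $\dim\ker\Delta_n$, which by Theorem \ref{thm:mainiso} equals $\dim H^n(K,\phi;\F)$.

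For (\ref{eq:zeroeigen1}) and (\ref{eq:zeroeigen2}), I would first observe the standard identities
\begin{equation*}
\ker(\delta_{n-1}\delta_{n-1}^*)=\ker\delta_{n-1}^*,\qquad \ker(\delta_n^*\delta_n)=\ker\delta_n,
\end{equation*}
which follow from positive-definiteness: $(\delta_{n-1}\delta_{n-1}^*f,f)=\|\delta_{n-1}^*f\|^2$ and $(\delta_n^*\delta_n f,f)=\|\delta_n f\|^2$. Combined with the rank--nullity theorem, this gives
\begin{equation*}
\dim\ker(\delta_n^*\delta_n)=\dim C^n(K;\F)-\rank\delta_n, \quad \dim\ker(\delta_{n-1}\delta_{n-1}^*)=\dim C^n(K;\F)-\rank\delta_{n-1},
\end{equation*}
using $\rank\delta_{n-1}^*=\rank\delta_{n-1}$ from Proposition \ref{prop:matrixdelta}.

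The main computational step is then to derive the closed formula for $\rank\delta_m$ by induction on $m\geq 0$. The base case is $m=-1$, where $C^{-1}(K;\F)=0$ forces $\rank\delta_{-1}=0$. For the inductive step, Proposition \ref{prop:dimH} rearranges to
\begin{equation*}
\rank\delta_m=\dim C^m(K;\F)-\dim H^m(K,\phi;\F)-\rank\delta_{m-1},
\end{equation*}
and unwinding this recursion yields the telescoping alternating sum
\begin{equation*}
\rank\delta_m=\sum_{j=0}^{m}(-1)^{m-j}\bigl(\dim C^j(K;\F)-\dim H^j(K,\phi;\F)\bigr).
\end{equation*}
Substituting $m=n$ and $m=n-1$ (and using $(-1)^{m-j}=(-1)^{m+j}$) yields (\ref{eq:zeroeigen2}) and (\ref{eq:zeroeigen1}) respectively.

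The only real obstacle is bookkeeping: keeping the sign conventions straight so that the alternating sum matches the stated expression, and checking that the induction bottoms out correctly at $n=0$ (where the sum in (\ref{eq:zeroeigen1}) is empty, consistent with $\rank\delta_{-1}=0$). No deep new idea is required beyond Theorem \ref{thm:mainiso}, Proposition \ref{prop:dimH}, and self-adjointness.
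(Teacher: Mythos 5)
Your proposal is correct and follows essentially the same route as the paper: identify $\ker(\delta_{n-1}\delta_{n-1}^*)$ and $\ker(\delta_n^*\delta_n)$ with $\ker\delta_{n-1}^*$ and $\ker\delta_n$ via positive-definiteness, reduce to computing $\rank\delta_m=\dim\Ima\delta_m$, and telescope the alternating sum from the relation $\dim C^m-\dim H^m=\rank\delta_{m-1}+\rank\delta_m$ (the paper re-derives this from split exact sequences rather than citing Proposition \ref{prop:dimH}, but it is the same identity), with Theorem \ref{thm:mainiso} handling the $\Delta_n$ case. The sign bookkeeping and the base case $\rank\delta_{-1}=0$ check out.
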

\begin{proof}
Note that $\Ima\delta_n$ and ${H}^n(K,\phi;\F)$ are vector spaces over $\F$, hence they are projective modules. Hence the following are split exact sequences:
\begin{align*}
0&\to\ker\delta_n\to C^n(K;\F)\to\Ima\delta_n\to 0,\\
0&\to\Ima\delta_{n-1}\to\ker\delta_n\to{H}^n(K,\phi;\F)\to 0.
\end{align*}

Thus we have
\begin{align}
\label{align:Cn}
\dim C^n(K;\F)&=\dim\ker\delta_n+\dim\Ima\delta_n,\\
\label{align:ker}
\dim\ker\delta_n&=\dim{H}^n(K,\phi;\F)+\dim\Ima\delta_{n-1}.
\end{align}

Substituting (\ref{align:ker}) into (\ref{align:Cn}), we have
\begin{equation*}
\dim C^n(K;\F)-\dim{H}^n(K,\phi;\F)=\dim\Ima\delta_{n-1}+\dim\Ima\delta_n
\end{equation*}
and hence
\begin{equation*}
\begin{split}
\dim\Ima\delta_n&=\sum_{j=-1}^n(-1)^{n+j}(\dim\Ima\delta_{j-1}+\dim\Ima\delta_j)\\
&=\sum_{j=0}^n (-1)^{n+j}(\dim C^j(K;\F)-\dim{H}^j(K,\phi;\F)).
\end{split}
\end{equation*}

Next, we note that the multiplicity of the eigenvalue zero for $\delta_{n-1}\delta_{n-1}^*$ is
\begin{equation*}
\begin{split}
\dim\ker\delta_{n-1}\delta_{n-1}^*&=\dim\ker\delta_{n-1}^*\\
&=\dim(\Ima\delta_{n-1})^\perp\\
&=\dim C^n(K;\F)-\dim\Ima\delta_{n-1}\\
&=\dim C^n(K;\F)-\sum_{j=0}^{n-1}(-1)^{n-1+j}(\dim C^j(K;\F)-\dim{H}^j(K,\phi;\F)).
\end{split}
\end{equation*}

We have proved (\ref{eq:zeroeigen1}).

To prove (\ref{eq:zeroeigen2}), we calculate that
\begin{equation*}
\begin{split}
\dim\ker\delta_n^*\delta_n&=\dim\ker\delta_n\\
&=\dim C^n(K;\F)-\sum_{j=0}^n (-1)^{n+j}(\dim C^j(K;\F)-\dim{H}^j(K,\phi;\F)).
\end{split}
\end{equation*}

To prove (\ref{eq:zeroeigen3}), we use Theorem \ref{thm:mainiso} to conclude that
\begin{equation*}
\dim\ker\Delta_n=\dim{H}^n(K,\phi;\F).
\end{equation*}
\end{proof}

\subsection{Harmonic Cochains (Eigenfunctions of the $\phi$-weighted Laplacian corresponding to the eigenvalue 0)}
In \cite{cappell2006cohomology}, the authors study the cohomology of harmonic forms on Riemannian manifolds with boundary. We apply the idea to study the harmonic cochains on simplicial complexes arising from the $\phi$-weighted Laplacian $\Delta$. Harmonic cochains are \emph{eigenfunctions} of the $\phi$-weighted Laplacian corresponding to the eigenvalue 0.

\begin{defn}
Let $\F=\R$ or $\C$. A cochain $f\in C^n(K;\F)$ is called \emph{harmonic} if $\Delta_n f=0$. In other words, a harmonic cochain $f$ is an \emph{eigenfunction} of $\Delta_n$ corresponding to the eigenvalue 0. We denote the subset of harmonic cochains in $C^n(K;\F)$ by $\Harm^n(K,\phi):=\ker\Delta_n$, where $\phi:K\times K\to\F$ is the weight function used in the definition of the $\phi$-weighted coboundary map.
\end{defn}

\begin{lemma}
\label{prop:commute}
The $\phi$-weighted coboundary map $\delta$ commutes with the $\phi$-weighted Laplacian $\Delta$. To be precise, we have $\Delta_{n+1}\delta_n=\delta_n\Delta_n$ for all $n\geq 0$.
\end{lemma}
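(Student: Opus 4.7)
The plan is to unfold the definition of $\Delta$ on both sides of the claimed identity and then use the fact that the $\phi$-weighted coboundary squares to zero (which follows by dualizing $\partial_{n-1}\partial_n=0$, proven earlier in the paper). Concretely, I will substitute $\Delta_{n+1}=\delta_n\delta_n^*+\delta_{n+1}^*\delta_{n+1}$ on the left and $\Delta_n=\delta_{n-1}\delta_{n-1}^*+\delta_n^*\delta_n$ on the right, expand, and show that all four resulting triple products collapse to the single surviving term $\delta_n\delta_n^*\delta_n$.

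Explicitly, I would compute
\begin{equation*}
\Delta_{n+1}\delta_n=\delta_n\delta_n^*\delta_n+\delta_{n+1}^*\delta_{n+1}\delta_n,
\end{equation*}
and observe that $\delta_{n+1}\delta_n=0$ kills the second summand. Similarly,
\begin{equation*}
\delta_n\Delta_n=\delta_n\delta_{n-1}\delta_{n-1}^*+\delta_n\delta_n^*\delta_n,
\end{equation*}
and $\delta_n\delta_{n-1}=0$ kills the first summand. Both sides therefore equal $\delta_n\delta_n^*\delta_n$, which gives the desired equality.

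There is essentially no obstacle: the only prerequisite is that $\delta\circ\delta=0$, which is immediate from the corresponding identity for $\partial$ (proved in the paper) together with the definition $\delta_n(f)=f\circ\partial_{n+1}$, since $\delta_{n+1}\delta_n(f)=f\circ\partial_{n+1}\partial_{n+2}=0$. This also instantly yields the adjoint version $\delta_{n-1}^*\delta_n^*=(\delta_n\delta_{n-1})^*=0$, which I would mention in passing since it has already been used earlier in the proof of Theorem~\ref{thm:mainiso}. No case analysis, inner product manipulation, or reference to matrix representations is needed; the statement is a purely formal consequence of the cochain complex structure induced by $\phi$.
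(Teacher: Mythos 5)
Your proposal is correct and follows essentially the same route as the paper: expand $\Delta_{n+1}\delta_n$ and $\delta_n\Delta_n$, use $\delta_{n+1}\delta_n=0$ and $\delta_n\delta_{n-1}=0$ to kill the extraneous terms, and observe that both sides reduce to $\delta_n\delta_n^*\delta_n$. The paper's proof is exactly this computation, so no further comparison is needed.
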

\begin{proof}
We have
\begin{equation*}
\begin{split}
\Delta_{n+1}\delta_n&=(\delta_n\delta_n^*+\delta_{n+1}^*\delta_{n+1})(\delta_n)\\
&=\delta_n\delta_n^*\delta_n+0\\
&=\delta_n(\delta_{n-1}\delta_{n-1}^*+\delta_n^*\delta_n)\\
&=\delta_n\Delta_n.
\end{split}
\end{equation*}
\end{proof}

\begin{prop}
$(\Harm^*(K,\phi),\delta)$ is a sub-cochain complex of the cochain complex $(C^*(K;\F),\delta)$.

(In $(\Harm^*(K,\phi),\delta)$, the map $\delta$ is the restriction of the original $\phi$-weighted coboundary map of $C^*(K;\F)$.)
\end{prop}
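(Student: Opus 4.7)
The plan is to verify the two conditions that define a sub-cochain complex: (i) each $\Harm^n(K,\phi)$ is a subgroup (in fact, a sub-$\F$-vector space) of $C^n(K;\F)$, and (ii) the restriction of $\delta_n$ sends $\Harm^n(K,\phi)$ into $\Harm^{n+1}(K,\phi)$. Once these are in place, the cochain condition $\delta_{n+1}\delta_n = 0$ on $\Harm^*(K,\phi)$ is inherited for free from the ambient cochain complex $(C^*(K;\F),\delta)$.

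For (i), I would simply observe that $\Harm^n(K,\phi) = \ker \Delta_n$ is the kernel of an $\F$-linear operator on $C^n(K;\F)$, hence is an $\F$-linear subspace. This is immediate and requires no further argument.

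The substantive step is (ii), and the key tool is Lemma \ref{prop:commute}, which gives $\Delta_{n+1}\delta_n = \delta_n \Delta_n$. Given $f \in \Harm^n(K,\phi)$, I would compute
\begin{equation*}
\Delta_{n+1}(\delta_n f) = \delta_n(\Delta_n f) = \delta_n(0) = 0,
\end{equation*}
so $\delta_n f \in \ker \Delta_{n+1} = \Harm^{n+1}(K,\phi)$. Thus the restriction of $\delta_n$ to $\Harm^n(K,\phi)$ indeed lands in $\Harm^{n+1}(K,\phi)$, giving a well-defined $\F$-linear map of the same name.

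I do not anticipate any genuine obstacle: the entire content is the commutation identity, which has already been established. The only thing to be careful about is stating clearly that the composition $\delta_{n+1} \circ \delta_n$ vanishes on $\Harm^n(K,\phi)$ because it already vanishes on all of $C^n(K;\F)$ (this uses the fact that the $\phi$-weighted coboundary operator squares to zero, which is dual to the identity $\partial_n \partial_{n+1} = 0$ and hence valid because $\phi$ is a weight function). With (i), (ii), and the inherited $\delta \circ \delta = 0$, the triple $(\Harm^*(K,\phi),\delta)$ satisfies all the defining properties of a sub-cochain complex of $(C^*(K;\F),\delta)$.
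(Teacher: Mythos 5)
Your proposal is correct and follows the paper's own argument exactly: the paper likewise notes that $\delta^2=0$ is inherited from the ambient complex and then applies Lemma \ref{prop:commute} to get $\Delta_{n+1}(\delta_n f)=\delta_n(\Delta_n f)=0$ for $f\in\Harm^n(K,\phi)$. Your additional remark that $\Harm^n(K,\phi)=\ker\Delta_n$ is a subspace as the kernel of a linear operator is a harmless (and correct) explicit statement of something the paper leaves implicit.
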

\begin{proof}
Since $\delta$ is the dual of $\partial$, it follows that $\delta^2=0$. Hence it remains to show that $\delta$ preserves harmonicity of cochains, so that $\delta_n: \Harm^n(K,\phi)\to\Harm^{n+1}(K,\phi)$ is a coboundary map.

Let $f\in\Harm^n(K,\phi)$, i.e.\ $\Delta_n f=0$. By Lemma \ref{prop:commute}, we have 
\begin{equation*}
\Delta_{n+1}(\delta_n f)=\delta_n(\Delta_n f)=0.
\end{equation*}
Hence, we have shown that $\delta$ preserves harmonicity of cochains.
\end{proof}

It is natural to try to compute the cohomology of $(\Harm^*(K,\phi),\delta)$, which we call the \emph{harmonic cohomology} of $(K,\phi)$.

\begin{theorem}
We have the following isomorphism
\begin{equation*}
H^n(\Harm^*(K,\phi),\delta)=\Harm^n(K,\phi)\cong{H}^n(K,\phi;\F).
\end{equation*}
\end{theorem}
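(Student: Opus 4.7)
The plan is to reduce the statement to Theorem~\ref{thm:mainiso} by showing that the coboundary operator restricted to the harmonic subcomplex vanishes identically, so the harmonic cohomology equals the harmonic cochains themselves.

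First, I would extract the key fact from the proof of Theorem~\ref{thm:mainiso}: there it was established (via the identity $(\Delta_n f,f)_{C^n} = (\delta_{n-1}^* f, \delta_{n-1}^* f)_{C^n} + (\delta_n f, \delta_n f)_{C^n}$ and positive-definiteness) that
\begin{equation*}
\ker \Delta_n \;=\; \ker \delta_{n-1}^* \cap \ker \delta_n.
\end{equation*}
In particular, every harmonic cochain $f \in \Harm^n(K,\phi) = \ker \Delta_n$ satisfies $\delta_n f = 0$. Consequently, the restricted map $\delta_n \colon \Harm^n(K,\phi) \to \Harm^{n+1}(K,\phi)$ is the zero map for every $n$.

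Next, I would compute the cohomology of $(\Harm^*(K,\phi),\delta)$ directly from this. Since both $\delta_n|_{\Harm^n} = 0$ and $\delta_{n-1}|_{\Harm^{n-1}} = 0$, we have
\begin{equation*}
H^n(\Harm^*(K,\phi),\delta) \;=\; \ker(\delta_n|_{\Harm^n})\big/\Ima(\delta_{n-1}|_{\Harm^{n-1}}) \;=\; \Harm^n(K,\phi)\big/0 \;=\; \Harm^n(K,\phi),
\end{equation*}
which gives the first equality.

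Finally, the isomorphism $\Harm^n(K,\phi) \cong H^n(K,\phi;\F)$ is exactly Theorem~\ref{thm:mainiso}, since $\Harm^n(K,\phi) = \ker \Delta_n$ by definition. Concatenating these two gives the desired chain $H^n(\Harm^*(K,\phi),\delta) = \Harm^n(K,\phi) \cong H^n(K,\phi;\F)$. There is no real obstacle here; the proof is essentially a bookkeeping consequence of the observation that harmonicity forces $\delta f = 0$, together with the Hodge-type isomorphism already proved.
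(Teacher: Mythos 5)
Your proposal is correct and follows essentially the same route as the paper's own proof: it invokes the identity $\ker\Delta_n=\ker\delta_{n-1}^*\cap\ker\delta_n$ established in the proof of Theorem~\ref{thm:mainiso} to conclude that the restricted coboundary maps vanish, and then applies that theorem again for the final isomorphism. Nothing is missing.
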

\begin{proof}
In the proof of Theorem \ref{thm:mainiso} (Equation \ref{eqn:perpcong}), we have shown that $\ker\Delta_n=\ker\delta_{n-1}^*\cap\ker\delta_n$. Hence for a harmonic cochain $f$ satisfying $\Delta_n f=0$, we have $\delta_n f=0$. This means that all the coboundary maps in $(\Harm^*(K,\phi),\delta)$ are zero, and hence
\begin{equation*}
\begin{split}
H^n(\Harm^*(K,\phi),\delta)&=\Harm^n(K,\phi)\\
&=\ker\Delta_n\\
&\cong{H}^n(K,\phi;\F).
\end{split}
\end{equation*}

The isomorphism in the last line is due to Theorem \ref{thm:mainiso}.
\end{proof}

\begin{remark}
We remark that this kind of result (that harmonic cohomology is isomorphic to the cohomology) holds in the case of closed manifolds. However, when a manifold $M$ is connected and has non-empty boundary, it is possible for the result not to hold true \cite{cappell2006cohomology}.
\end{remark}

\section{Applications}
We outline some possible applications of $\phi$-weighted (co)homology and the $\phi$-weighted Laplacian.

\subsection{Applications to Weighted Polygons}
\label{subsec:biomolecular}
In this subsection, we study the $\phi$-weighted homology of weighted polygons, which are special cases of 1-dimensional weighted simplicial complexes. A motivating example is that of ring structures, which are widely found in biomolecules (e.g.\ proteins \cite{xia2018multiscale,engh1991accurate,laskowski1993main}, DNA and RNA). The atoms that are in or attached to these ring structures are usually different, resulting in various ring sizes and different bond angles.

We show in Example \ref{eg:pentagon} that $\phi$-weighted homology is able to distinguish between pentagons with different \emph{angle weights}. The $\phi$-weighted homology could potentially be used to strengthen existing topological data analysis tools, such as persistent homology, which are effectively used in the data analysis of biomolecules \cite{xia2014persistent,xia2015multiresolution,xia2015persistent,cang2017topologynet,cang2018persistent,cang2018representability}.
\begin{figure}[htbp]
\begin{center}
\begin{tikzpicture}[scale=0.75]
\coordinate (v0) at (-2.2,1.5);
\coordinate (v1) at (0,0);
\coordinate (v2) at (2.2,1.5);
\coordinate (v3) at (1,3);
\coordinate (v4) at (-1,3);

\filldraw 
(v1) circle (2pt) node[align=left,   below] {$v_1$}
-- (v2) circle (2pt) node[align=center, below, right] {$v_2$}  
-- (v3) circle (2pt) node[align=left,   above] {$v_3$} 
-- (v4) circle (2pt) node[align=left, above] {$v_4$}
-- (v0) circle (2pt) node[align=left,   above, left] {$v_0$} 
-- (v1);
\pic [draw, -, "$\alpha_0$", angle eccentricity=1.5] {angle = v1--v0--v4};
\pic [draw, -, "$\alpha_1$", angle eccentricity=1.5] {angle = v2--v1--v0};
\pic [draw, -, "$\alpha_2$", angle eccentricity=1.5] {angle = v3--v2--v1};
\pic [draw, -, "$\alpha_3$", angle eccentricity=1.5] {angle = v4--v3--v2};
\pic [draw,-,"$\alpha_4$", angle eccentricity=1.5] {angle = v0--v4--v3};
\end{tikzpicture}
\caption{The simplicial complex $K$ with five vertices and five edges.}
\label{fig:pentagon}
\end{center}
\end{figure}
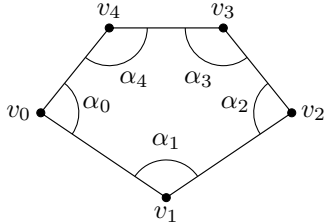

The Smith normal form is commonly used to compute integral homology \cite{dumas2003computing,kaczynski2004computational}. We will use the following fact about Smith normal form:
\begin{prop}[cf.\ {\cite[Prop.~8.1]{miller2009differential}}]
\label{prop:diagsmith}
If the diagonal entries in the Smith normal form of a matrix $A$ are $d_1,d_2,\dots$, then the partial product $d_1d_2\dots d_k$ is the greatest common divisor of all determinants of $k\times k$ minors of the original matrix $A$.
\end{prop}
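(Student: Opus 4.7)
The plan is to exploit the defining property of Smith normal form, namely that $D = PAQ$ where $P$ and $Q$ are unimodular integer matrices, and show that the quantity $\gcd_k(M) := \gcd\{\text{all } k\times k \text{ minor determinants of } M\}$ is invariant under left and right multiplication by unimodular matrices. Once this invariance is established, computing $\gcd_k(D)$ for the diagonal matrix $D$ finishes the proof.

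First I would establish the invariance step, which is the main technical point. By the Cauchy--Binet formula, any $k \times k$ minor of $PA$ is an integer linear combination of $k \times k$ minors of $A$: if $(PA)_{I,J}$ denotes the submatrix with row indices $I$ and column indices $J$ (both of size $k$), then
\begin{equation*}
\det (PA)_{I,J} = \sum_{|L|=k} \det P_{I,L} \cdot \det A_{L,J},
\end{equation*}
with all $\det P_{I,L} \in \Z$. Hence $\gcd_k(A) \mid \det(PA)_{I,J}$ for every choice of $I,J$, so $\gcd_k(A) \mid \gcd_k(PA)$. Applying the same argument to $A = P^{-1}(PA)$, using that $P^{-1}$ is again an integer matrix (since $P$ is unimodular), yields the reverse divisibility, so $\gcd_k(PA) = \gcd_k(A)$. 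The identical argument on the right gives $\gcd_k(AQ) = \gcd_k(A)$. Combining these two invariances, $\gcd_k(A) = \gcd_k(PAQ) = \gcd_k(D)$.

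It remains to compute $\gcd_k(D)$ directly. Since $D$ is diagonal, any $k \times k$ submatrix $D_{I,J}$ has determinant zero unless $I = J$, in which case $\det D_{I,I} = \prod_{i \in I} d_i$. The set of nonzero $k \times k$ minors of $D$ is therefore exactly
\begin{equation*}
\Bigl\{ \prod_{i \in I} d_i \;:\; I \subseteq \{1,2,\dots\},\ |I| = k \Bigr\}.
\end{equation*}
The Smith normal form satisfies the divisibility chain $d_1 \mid d_2 \mid d_3 \mid \cdots$, from which it follows that $d_1 d_2 \cdots d_k$ divides $\prod_{i \in I} d_i$ for every index set $I$ of size $k$. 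Since $d_1 d_2 \cdots d_k$ is itself achieved by the choice $I = \{1,\dots,k\}$, it is the greatest common divisor of all these products. Combined with the invariance step, $d_1 d_2 \cdots d_k = \gcd_k(D) = \gcd_k(A)$.

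The main obstacle is really only the invariance under unimodular multiplication; everything else is bookkeeping. Cauchy--Binet makes this clean, and the essential use of unimodularity (as opposed to arbitrary integer invertibility-free operations) is that $P^{-1}$ remains an integer matrix, which is what gives divisibility in both directions rather than just one.
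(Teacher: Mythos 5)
Your proof is correct and complete: the Cauchy--Binet expansion shows every $k\times k$ minor of $PA$ is a $\Z$-linear combination of $k\times k$ minors of $A$, unimodularity of $P$ and $Q$ gives divisibility in both directions, and the divisibility chain $d_1\mid d_2\mid\cdots$ correctly identifies $d_1\cdots d_k$ as the gcd of the surviving diagonal minors of the Smith form. The paper itself gives no proof of this proposition --- it is quoted as a known fact from the cited reference --- so there is nothing to compare against; your argument is the standard one for this result and fills the gap cleanly.
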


\begin{eg}
\label{eg:pentagon}
Consider the pentagon shown in Figure \ref{fig:pentagon}, viewed as a simplicial complex $K$ with five vertices and five edges. The \emph{angle weights} $\alpha_0,\dots,\alpha_4$ are chosen to represent the interior angles of the polygon. (Depending on the situation, there may be different possible schemes of assigning values to $\alpha_i$.)

Consider the weight function $\phi:K\times K\to\Z$ such that $\phi([v_i,v_j],[v_i])=\alpha_i$ and $\phi([v_i,v_j],[v_j])=\alpha_j$ for all $i<j$, and adopting the convention (cf.\ Remark \ref{remark:notface}) that $\phi(\sigma,d_i\sigma)=0$ if $\sigma$ is a 0-simplex, such that the conditions of Proposition \ref{prop:integercond} are satisfied. We note that for non-zero angle weights, this weighting is of Dawson type: each edge has weight $C$ and each vertex $v_i$ has weight $C/\alpha_i$, where $C$ is some large integer that all the $\alpha_i$ divide.

The matrix representation of the boundary operator is:
\begin{equation*}
[\partial_1]=\mleft[
\begin{array}{c|ccccc}
 &[v_0,v_1] &[v_0,v_4] &[v_1,v_2] &[v_2,v_3] &[v_3,v_4]\\
\hline
{[}v_0] &-\alpha_0 &-\alpha_0 &0 &0 &0\\
{[}v_1] &\alpha_1 &0 &-\alpha_1 &0 &0\\
{[}v_2] &0 &0 &\alpha_2 &-\alpha_2 &0\\
{[}v_3] &0 &0 &0 &\alpha_3 &-\alpha_3\\
{[}v_4] &0 &\alpha_4 &0 &0 &\alpha_4
\end{array}
\mright].
\end{equation*}

By Proposition \ref{prop:diagsmith}, we have:
\begin{align*}
d_1&=\gcd(\alpha_0,\alpha_1,\alpha_2,\alpha_3,\alpha_4)\\
d_1d_2&=\gcd(\{\alpha_i\alpha_j\mid i<j\})=\gcd(\alpha_0\alpha_1,\alpha_0\alpha_2,\dots,\alpha_3\alpha_4)\\
d_1d_2d_3&=\gcd(\{\alpha_i\alpha_j\alpha_k\mid i<j<k\})\\
d_1d_2d_3d_4&=\gcd(\{\alpha_i\alpha_j\alpha_k\alpha_l\mid i<j<k<l\})\\
d_1d_2d_3d_4d_5&=\gcd(\det[\partial_1])=0.
\end{align*}
This also implies that $d_5=0$. The resulting 0-th $\phi$-weighted homology is then:
\begin{equation*}
H_0(K,\phi;\Z)=\Z/d_1\oplus\Z/d_2\oplus\Z/d_3\oplus\Z/d_4\oplus\Z.
\end{equation*}

We see that different values of $\alpha_i$ can lead to different $\phi$-weighted homology. In particular, when $\alpha_0=\dots=\alpha_4=1$, we have $H_0(K,\phi;\Z)=\Z$, which corresponds to the usual unweighted homology.

For example, suppose that $\alpha_0=1$, $\alpha_1=\alpha_2=\alpha_3=\alpha_4=2$. Then $d_1=1$, and $d_2=d_3=d_4=2$. The resulting 0-th $\phi$-weighted homology is then:
\begin{equation*}
H_0(K,\phi;\Z)=\Z/2\oplus\Z/2\oplus\Z/2\oplus\Z.
\end{equation*}
\end{eg}

We may generalize Example \ref{eg:pentagon} to the case of the $n$-sided polygon (or $n$-gon for short) with $n$ angle weights. We remark that ``angle weights'' is just a convenient nomenclature. Depending on the application, ``angle weights'' may be used to reflect some other quantities, not just angles.

We remark that the argument above does not take into account directions of edges. An alternative approach that does so utilizing a nonsymmetric operator with complex eigenvalues is presented by F.\ Bauer \cite{bauer2012normalized}.

\begin{theorem}
\label{thm:ngon}
Let $K$ be a $n$-gon, viewed as a simplicial complex with $n$ vertices $v_0,\dots,v_{n-1}$ and $n$ edges in the edge set $E=\{[v_i,v_{i+1}]\mid 0\leq i\leq n-1\}$. (For convenience, we define $v_n:=v_0$.) Let $\alpha_0,\dots,\alpha_{n-1}\in\Z$ represent the $n$ angle weights of the $n$-gon $K$.

Let the weight function $\phi:K\times K\to\Z$ be such that $\phi([v_i,v_j],[v_i])=\alpha_i$, $\phi([v_i,v_j],[v_j])=\alpha_j$ for all $i<j$.

Then
\begin{equation*}
H_0(K,\phi;\Z)=\Z/d_1\oplus\dots\oplus\Z/d_{n-1}\oplus\Z,
\end{equation*}
where
\begin{equation}
\label{eq:gcdds}
d_1d_2\dots d_k=\gcd(\{\alpha_{i_1}\alpha_{i_2}\dots\alpha_{i_k}\mid {i_1<i_2<\dots<i_k}\})
\end{equation}
for $1\leq k\leq n-1$ and $d_n=0$.
\end{theorem}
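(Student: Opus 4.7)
The plan is to read off $H_0(K,\phi;\Z)$ from the Smith normal form of the $n\times n$ matrix $[\partial_1]$. Since $\partial_0=0$, we have
\[
H_0(K,\phi;\Z)=C_0(K;\Z)/\Ima\partial_1\cong\Z/d_1\oplus\cdots\oplus\Z/d_n,
\]
where $d_1\mid d_2\mid\cdots\mid d_n$ are the diagonal entries of the Smith normal form of $[\partial_1]$. Proposition~\ref{prop:diagsmith} identifies each partial product $d_1d_2\cdots d_k$ as the gcd of all $k\times k$ minors of $[\partial_1]$, so the task reduces to computing these gcds and showing that $d_n=0$.

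I would first describe $[\partial_1]$ explicitly: its column for the edge $[v_i,v_j]$ (with $i<j$) has entry $-\alpha_i$ in row $v_i$, entry $+\alpha_j$ in row $v_j$, and zeros elsewhere; in particular every nonzero entry of row $v_m$ equals $\pm\alpha_m$. Expanding any $k\times k$ minor whose rows are $\{v_{i_1},\dots,v_{i_k}\}$ as a signed sum over permutations then shows that such a minor is an integer multiple of $\alpha_{i_1}\alpha_{i_2}\cdots\alpha_{i_k}$, which immediately gives the easy divisibility
\[
\gcd\{\alpha_{i_1}\cdots\alpha_{i_k}\mid i_1<\cdots<i_k\}\ \Bigm|\ d_1\cdots d_k.
\]

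For the reverse divisibility in the range $1\le k\le n-1$, I would exhibit, for each subset $\{i_1<\cdots<i_k\}$, an explicit $k\times k$ minor equal to $\pm\alpha_{i_1}\cdots\alpha_{i_k}$. Since $k\le n-1$, some vertex $v_l$ is missing from the chosen rows; cutting the cycle at $v_l$ unfolds it into a linear path on which the chosen vertices split into maximal arcs of consecutive indices, say $v_p,v_{p+1},\dots,v_{p+a-1}$. For each arc I select the $a-1$ internal edges $e_p,\dots,e_{p+a-2}$ together with the single external edge immediately to the left of the arc in the cycle (possibly the wraparound edge $[v_0,v_{n-1}]$). Ordering rows and columns compatibly, each per-arc block becomes triangular with diagonal entries $\pm\alpha_p,\pm\alpha_{p+1},\dots,\pm\alpha_{p+a-1}$; distinct arcs are separated by at least one omitted vertex, so they contribute disjoint rows and columns, and the resulting submatrix is block triangular with determinant $\pm\alpha_{i_1}\cdots\alpha_{i_k}$. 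This yields $d_1\cdots d_k=\gcd\{\alpha_{i_1}\cdots\alpha_{i_k}\}$.

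Finally, I would verify that $\det[\partial_1]=0$: when all $\alpha_i$ are nonzero, the row combination $\sum_i(c/\alpha_i)\,r_i$ with $c=\lcm(\alpha_0,\dots,\alpha_{n-1})$ kills every column, since each column contributes $(-c/\alpha_i)\alpha_i+(c/\alpha_j)\alpha_j=0$; if some $\alpha_i=0$ then that row is already zero. Hence $d_1\cdots d_n=0$, and since the $d_i$ form a divisibility chain this forces $d_n=0$, turning the last summand $\Z/d_n$ into $\Z$ and completing the stated formula. I expect the arc construction in the previous paragraph to be the main technical obstacle, specifically handling the sign conventions when the wraparound edge $[v_0,v_{n-1}]$ appears as an external edge and verifying that the per-arc blocks fit together as a single block-triangular matrix.
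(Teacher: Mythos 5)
Your proposal is correct and follows essentially the same route as the paper's proof: both read off $H_0$ from the Smith normal form via Proposition~\ref{prop:diagsmith}, establish $d_1\cdots d_k=\gcd\{\alpha_{i_1}\cdots\alpha_{i_k}\}$ by showing every $k\times k$ minor is a multiple of the corresponding product and then exhibiting an explicit (block-)triangular minor realizing each product, and finish by proving $\det[\partial_1]=0$. Your arc decomposition is a more careful rendering of the paper's single upper-triangular minor (which the paper justifies with a somewhat breezy ``relabel the vertices'' step), while the paper handles $\det[\partial_1]=0$ slightly more cleanly by observing that the columns sum to zero, which avoids your case split on vanishing $\alpha_i$.
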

\begin{proof}
By Proposition \ref{prop:diagsmith}, it suffices to show that:
\begin{enumerate}
\item The determinants of $k\times k$ minors of the matrix of the $\phi$-weighted boundary map $[\partial_1]$ are either 0 or of the form $\pm\alpha_{i_1}\alpha_{i_2}\dots\alpha_{i_k}$, $i_1<i_2<\dots<i_k$, for $1\leq k\leq n-1$. Furthermore, the above-mentioned determinants include all $n\choose k$ possible choices of $\alpha_{i_1}\alpha_{i_2}\dots\alpha_{i_k}$.
\item $\det[\partial_1]=0$.
\end{enumerate}

To prove (1), suppose $A$ is a $k\times k$ minor of $[\partial_1]$, where $1\leq k\leq n-1$. Each column of $A$ contains at most 2 nonzero entries by definition of the $\phi$-weighted boundary matrix $[\partial_1]$. Since $k<n$, there must exist one column $C$ of $A$ with at most 1 nonzero entry. If $C$ is a zero column, then $\det(A)=0$. Suppose $C$ has a nonzero entry, say $\pm\alpha_i$ for some $i$. We do cofactor expansion along $C$. The minor $A'$ of $A$ obtained by deleting the row and column containing $\pm\alpha_i$ is also a minor of $[\partial_1]$, hence again there exists a column $C'$ of $A'$ with at most 1 nonzero entry. If $C'$ is a zero column, then $\det(A)=0$. Suppose $C'$ has a nonzero entry, say $\pm\alpha_j$. Note that by construction of $[\partial_1]$, we have that $\alpha_i\neq\alpha_j$ since they are on different rows. Continuing cofactor expansion, we see that $\det(A)=0$ or $\det(A)=\pm\alpha_{i_1}\alpha_{i_2}\dots\alpha_{i_k}$, where $1\leq k\leq n-1$.

To show that all $n\choose k$ possible choices of $\alpha_{i_1}\alpha_{i_2}\dots\alpha_{i_k}$ indeed occur, we show that there exists a $k\times k$ minor with determinant $\pm\alpha_{i_1}\alpha_{i_2}\dots\alpha_{i_k}$. Since $k<n$, there exists $0\leq j\leq n-1$ such that $j$ is not equal to any of $i_1,i_2,\dots,i_k$. Without loss of generality, we assume $[v_{i_1},v_j]$ is in the edge set $E$. (If not, we may relabel the vertices which amounts to swapping rows/columns which only affects the determinant of the minor by a sign, and hence does not change the $\gcd$ in Equation \ref{eq:gcdds}. Also note that if $j<i_1$, we may just switch the order of the vertices to $[v_j,v_{i_1}]$ which only changes the entries in the column by a sign.)

Consider the matrix
\begin{equation*}
B=\mleft[
\begin{array}{c|ccccc}
 &[v_{i_1},v_j] &[v_{i_2-1},v_{i_2}] &\dots &[v_{i_{k-1}-1},v_{i_{k-1}}] &[v_{i_k-1},v_{i_k}]\\
\hline
{[}v_{i_1}] &-\alpha_{i_1} &* &\dots &* &*\\
{[}v_{i_2}] &0 &\alpha_{i_2} &\dots &* &*\\
{\vdots} &\vdots &\vdots &\vdots &\vdots &\vdots\\
{[}v_{i_{k-1}}] &0 &0 &\dots &\alpha_{i_{k-1}} &*\\
{[}v_{i_k}] &0 &0 &\dots &0 &\alpha_{i_k}
\end{array}
\mright]
\end{equation*}
where $*$ entries could be either 0 or nonzero.

We see that $B$ is upper triangular with $\det(B)=\pm\alpha_{i_1}\alpha_{i_2}\dots\alpha_{i_k}$. Note that $B$ is a minor of $[\partial_1]$ possibly after interchanging of rows/columns which preserves the determinant up to a sign change. We have proven (1) which implies that Equation \ref{eq:gcdds} holds for $1\leq k\leq n-1$.

To prove (2), observe that the columns of
\begin{equation*}
[\partial_1]=\mleft[
\begin{array}{c|cccc}
&[v_0,v_1] &[v_1,v_2] &\dots &[v_{n-1},v_0]\\
\hline
{[}v_0] &-\alpha_0 & 0 &\dots &\alpha_0\\
{[}v_1] &\alpha_1 &-\alpha_1 &\dots &0\\
\vdots &\vdots &\vdots &\vdots &\vdots\\
{[}v_{n-1}] &0 &0 &\dots &-\alpha_{n-1}
\end{array}
\mright]
\end{equation*}
sum up to zero when the column labels are $[v_i,v_{i+1}]$. Hence the columns of $[\partial_1]$ are linearly dependent which implies that $\det[\partial_1]=0$. (Note that rearrangement of columns/rows do not change the zero determinant since such operations amount to multiplying the zero determinant by -1.)
\end{proof}

\subsection{Applications to Weighted Laplacians of Digraphs}
\label{subsec:network}
A digraph can be viewed as a graph where each edge has a weight corresponding to its direction. That is, there is a total of 2 possible weights corresponding to the 2 different directions. We study the $\phi$-weighted Laplacians of certain digraphs, with the motivating example being network motifs. We remark that our methods can be used for essentially any digraph that may appear in other disciplines such as computer science. The subject of network motifs has found important applications in biology \cite{alon2007network,ferrell2013feedback,milo2002network,masoudi2012building}. In the review paper by Uri Alon \cite{alon2007network}, it is mentioned that recent work \cite{shen2002network,milo2002network} indicates that transcription networks contain a small set of network motifs.

An important family of network motifs is the feedforward loop (FFL) \cite[p.~452]{alon2007network}. This motif consists of three genes X,Y,Z. Gene X regulates genes Y and Z, while gene Y regulates gene Z. Each regulatory interaction in the FFL can be either activation or repression, leading to $2^3=8$ possible types of FFLs (cf.\ \cite[p.~452]{alon2007network}). In \cite[p.~452]{alon2007network}, activation is represented by the usual arrow (\begin{tikzpicture}[baseline=-0.5ex]\draw[-latex] (0,0)--(0.5,0);\end{tikzpicture}), while repression is represented by an arrow with a bar (\begin{tikzpicture}\draw[-|] (0,0)--(0.5,0);\end{tikzpicture}). We see that mathematically, each FFL is essentially a digraph.

We show that the eigenvalues and eigenvectors of the $\phi$-weighted Laplacian can distinguish between the 8 different types of FFLs. Firstly, we view the genes X,Y,Z as vertices of a simplicial complex $K$, and the regulatory interactions as edges. Hence, $K=\{[X],[Y],[Z],[X,Y],[Y,Z],[X,Z]\}$. We define the weight function $\phi:K\times K\to\R$ such that the matrix of the $\phi$-weighted boundary map is
\begin{equation*}
[\partial_1]=\mleft[
\begin{array}{c|ccc}
&[X,Y] &[Y,Z] &[X,Z]\\
\hline
{[}X] &-a & 0 &-c\\
{[}Y] &a &-b &0\\
{[}Z] &0 &b &c
\end{array}
\mright],
\end{equation*}
where $a,b,c\in\R$.

We calculate that the matrix of the $\phi$-weighted Laplacian is
\begin{equation*}
[\Delta_0]=\begin{bmatrix}
a^2+c^2 &-a^2 &-c^2\\
-a^2 & a^2+b^2 &-b^2\\
-c^2 &-b^2 &b^2+c^2
\end{bmatrix}.
\end{equation*}

Let $d=\sqrt{a^4+b^4+c^4-a^2b^2-a^2c^2-b^2c^2}$. For $b\neq c$, the eigenvectors of $[\Delta_0]$ are
\begin{equation*}
\mathbf{u_1}=\begin{pmatrix}1\\1\\1\end{pmatrix},\quad\mathbf{u_2}=\begin{pmatrix}-(d-a^2+b^2)/(b^2-c^2)\\(d-a^2+c^2)/(b^2-c^2)\\1\end{pmatrix}, \quad\mathbf{u_3}=\begin{pmatrix}(d+a^2-b^2)/(b^2-c^2)\\-(d+a^2-c^2)/(b^2-c^2)\\1\end{pmatrix}
\end{equation*}

corresponding to eigenvalues $\lambda_1=0$, $\lambda_2=-d+a^2+b^2+c^2$, and $\lambda_3=d+a^2+b^2+c^2$ respectively.

For $b=c$, the eigenvectors of $[\Delta_0]$ are $\mathbf{u_1}=\begin{pmatrix}1\\1\\1\end{pmatrix}$, 
$\mathbf{u_2}=\begin{pmatrix}-1/2\\-1/2\\1\end{pmatrix}$, $\mathbf{u_3}=\begin{pmatrix}-1\\1\\0\end{pmatrix}$ corresponding to eigenvalues $\lambda_1=0$, $\lambda_2=3b^2$, $\lambda_3=2a^2+b^2$ respectively.

By choosing suitable values of $a,b,c$, we can distinguish between the 8 types of FFLs. For instance, we can choose $a=1$ if the edge $[X,Y]$ represents an activation interaction while we choose $a=2$ if $[X,Y]$ represents a repression interaction. Similarly, we choose $b,c=1$ or $2$ depending on whether $[Y,Z], [X,Z]$ are activation or repression interactions respectively. It can be verified that such choices of $a,b,c$ can lead to eigenvectors and eigenvalues that distinguish between all 8 types of FFLs, as shown in Table \ref{table:abc}.

\begin{table}[htbp!]
\centering
\caption{Table showing eigenvectors/eigenvalues of the $\phi$-weighted Laplacian.}
 \label{table:abc}
 \begin{tabular}{||c||c|c|c||c|c|c|c||} 
 \hline
 Type of FFL & $a$ & $b$ & $c$ & $\mathbf{u_2}$ & $\mathbf{u_3}$ &$\lambda_2$ &$\lambda_3$\\ [0.5ex] 
 \hline\hline
 Coherent type 1 &1 &1 &1 &$(-0.5,-0.5,1)$ &$(-1,1,0)$ &3 &3\\
 \hline
 Coherent type 2 &2 &1 &2 &$(0,-1,1)$ &$(-2,1,1)$ &6 &12\\
 \hline
 Coherent type 3 &1 &2 &2 &$(-0.5,-0.5,1)$ &$(-1,1,0)$ &12 &6\\
 \hline
 Coherent type 4 &2 &2 &1 &$(-1,0,1)$ &$(1,-2,1)$ &6 &12\\
 \hline
 Incoherent type 1 &1 &2 &1 &$(-2,1,1)$ &$(0,-1,1)$ &3 &9\\
 \hline
 Incoherent type 2 &2 &2 &2 &$(-0.5,-0.5,1)$ &$(-1,1,0)$ &12 &12\\
 \hline
 Incoherent type 3 &1 &1 &2 &$(1,-2,1)$ &$(-1,0,1)$ &3 &9\\
 \hline
 Incoherent type 4 &2 &1 &1 &$(-0.5,-0.5,1)$ &$(-1,1,0)$ &3 &9\\
 \hline
 \end{tabular}
 \end{table}

\begin{remark}
We remark that an advantage of our definition is that the $\phi$-weighted Laplacian for digraphs generalizes easily to higher dimensions (directed simplicial complexes), and also to cases where there are more than two types of arrows (e.g.\ weighted graphs/digraphs). This is because firstly, our $\phi$-weighted Laplacian is originally defined for dimension $n$ for all $n\geq 0$. Secondly, in the case of more than two types of arrows, we have the ability to choose a different weight for each type of arrow.
\end{remark}

\section{Future Directions}
In Example \ref{eg:pentagon}, sometimes it may not be easy to find the bond angles, as a node can be shared by more than two edges. In this case, other node or edge information such as degree, centrality, cluster coefficient, discrete Ricci curvatures could be studied further as these are geometric properties that are well-defined on graphs. 

We remark that a drawback of the bond angle application, in the case where the weight function $\phi$ takes values in $\mathbb{Z}$, is that the angles have to be integers. In practice, this could be remedied by rounding off the angles to the nearest integer, or alternatively multiplying all angles by a sufficiently large power of 10.

In the second application (Subsection \ref{subsec:network}), it will be interesting to find a suitable weight function that is able to discriminate the type of feedforward loops using purely homology information. Finally, persistent homology has proven to be a very important tool in analyzing the shape of data, it will be useful to extend the current work to persistent homology.

\section*{Acknowledgements}
We wish to thank most warmly the referees of Houston Journal of Mathematics for numerous suggestions that have improved the exposition of this paper.


\bibliographystyle{amsplain}
\bibliography{jabref9}

\end{document}